\documentclass[10pt]{amsart}

\usepackage[notref,notcite]{showkeys}
\usepackage{latexsym,enumerate}
\usepackage{amsmath,amsthm,amsopn,amstext,amscd,amsfonts,amssymb}
\usepackage{color}
\usepackage{fancybox}
\usepackage{epsfig}
\usepackage{float}
\usepackage{ulem} 

\usepackage[active]{srcltx}

\usepackage{color}


\newcommand{\R}{\mathbb{R}}
\newcommand{\N}{\mathbb{N}}

\newcommand{\car}{{\raise0pt\hbox{{\LARGE $\chi$}}}}

\newcommand{\sg}{{\rm \; sign}}

\newcommand{\Div}{\hbox{\rm div\,}}

\newcommand{\e }{\varepsilon }

\newcommand{\dis }{{\mathcal D}' }
\newcommand{\z }{{\bf z}}
\newcommand{\DM }{\mathcal{DM}^\infty }

\newtheorem{Theorem}{Theorem}[section]
\newtheorem{Corollary}[Theorem]{Corollary}
\newtheorem{Definition}[Theorem]{Definition}

\newtheorem{Proposition}[Theorem]{Proposition}

\newtheorem{remark}[Theorem]{Remark}
\newtheorem{Example}[Theorem]{Example}
\newcommand{\res}{\!\!\mathop{\hbox{
                                \vrule height 7pt width .5pt depth 0pt
                                \vrule height .5pt width 6pt depth 0pt}}
                                \nolimits}
\newcommand{\norma}[2]{\|#1\|_{\lower 4pt \hbox{$\scriptstyle #2$}}}

\usepackage{multirow}
\usepackage{array}

\newcommand{\h}{{\mathcal H}} 

\begin{document}

\title[An equation involving the $1$--Laplacian and $L^1$--data]{Existence and comparison results for an elliptic equation involving the $1$--Laplacian and $L^1$--data}

\author[M. Latorre and  S. Segura de Le\'on]
{Marta Latorre and  Sergio Segura de Le\'on}

\address{M. Latorre: Departament d'An\`{a}lisi Matem\`atica,
Universitat de Val\`encia,
Dr. Moliner 50, 46100 Burjassot, Spain.
{\it E-mail address:} {\tt marta.latorre@uv.es }}

\address{S. Segura de Le\'on: Departament d'An\`{a}lisi Matem\`atica,
Universitat de Val\`encia,
Dr. Moliner 50, 46100 Burjassot, Spain.
{\it E-mail address:}  {\tt sergio.segura@uv.es }}

\thanks{}
\keywords{Nonlinear elliptic equations, $L^1$--data, $1$--Laplacian operator, Total Variation term, Comparison Principle, Inverse Mean Curvature Flow
\\
\indent 2010 {\it Mathematics Subject Classification: MSC 2010: 35J75, 35J60, 35J25, 35J15} }


\bigskip
\begin{abstract}
This paper is devoted to analyse the Dirichlet problem for a nonlinear elliptic equation involving the $1$--Laplacian and a total variation term, that is, the inhomogeneous case of the equation arising in the level set formulation of the inverse mean curvature flow.
We study this problem in an open bounded set with Lipschitz boundary.

We prove an existence result and a comparison principle for non--negative $L^1$--data. Moreover, we search the summability that the solution reaches when more regular $L^p$--data, with $1<p<N$, are considered and we give evidence that this summability is optimal.

To prove these results, we apply the theory of $L^\infty$--divergence--measure fields which goes back to Anzellotti (1983). The main difficulties of the proofs come from the absence of a definition for the pairing of a general $L^\infty$--divergence--measure field and the gradient of an unbounded $BV$--function.

\end{abstract}

\maketitle

\tableofcontents

\section{Introduction}

Our aim in this paper is to analyze the following Dirichlet problem:
\begin{equation}\label{prob-prin}
\left\{\begin{array}{ll}
\displaystyle -\Div\left(\frac{Du}{|Du|}\right)+|Du|=f(x)&\hbox{ in }\,\Omega\,,\\[3mm]
u=0 &\hbox{ on }\,\partial\Omega\,,
\end{array}\right.
\end{equation}
where $\Omega$ is a bounded open subset of $\R^N$ with Lipschitz boundary $\partial\Omega$ and $f$ is a non--negative function belonging to $L^1(\Omega)$. As usual when the $1$--Laplacian operator is considered, the natural energy space to study this problem is $BV(\Omega)$, that is, the space of all functions of bounded variation.

The homogeneous problem, in an unbounded domain, arises in the level set formulation of the inverse mean curvature flow, namely,
\begin{equation}\label{IMCF}
\left\{\begin{array}{ll}
\displaystyle -\Div\left(\frac{Du}{|Du|}\right)+|Du|=0&\hbox{ in }\,\Omega\,,\\[3mm]
u=0 &\hbox{ on }\,\partial\Omega\,,\\[3mm]
u(x)\to \infty &\hbox{ as }\, |x| \to \infty\,.
\end{array}\right.
\end{equation}
The inverse mean curvature flow is a one--parameter family of hypersurfaces $\{\Gamma_t\}_{t \geq 0}$ whose normal velocity $V_n(t)$ at each time $t$ equals the inverse of its mean curvature $H(t)$. Given $\Gamma_0$, the problem is to find $F: \Gamma_0 \times [0, T] \rightarrow \R^N$ such that
\begin{equation}\label{e1imc}
\frac{\partial F}{\partial t} = \frac{\nu}{H}\,, \quad \quad t \geq 0\,,
\end{equation}
where $\nu(t)$ denotes the unit outward normal to $\Gamma_t=F(\Gamma_0, t)$. The level set formulation \eqref{IMCF} was introduced in \cite{Huisken} (see also \cite{Huisken2, Moser}); observe that $\Div\left(\frac{Du}{|Du|}\right)$ gives the mean curvature and $|Du|$ yields the inverse of the speed. In the case that $\Omega$ includes a bounded connected component, it produces a sudden phenomenon called fattening by which this component disappear instantaneously.

If a non--negative source is considered, as in \eqref{prob-prin}, then \eqref{e1imc} becomes
\begin{equation*}
\frac{\partial F}{\partial t} = \frac{\nu}{H+\hbox{source}}\le \frac{\nu}{H}, \quad \quad t \geq 0,
\end{equation*}
so that the datum damps the flux. This inhomogeneous inverse mean curvature flow was studied in \cite{MS0}.

Although the homogeneous problem is not interesting in bounded domains because it leads to the trivial solution, this does not occur in the non--homogeneous case since the source can override the fattening phenomenon (at least when $f$ is not very small). Problem \eqref{prob-prin} in bounded domains has been considered in \cite{MS} for data $f\in L^p(\Omega)$, with $p>N$, seeking bounded solutions, and in \cite{LS} when data belong to the Marcinkiewicz space $L^{N,\infty}(\Omega)$, looking for unbounded variational solutions. Existence and uniqueness results have been obtained in both papers for any given non--negative datum.

It is worth mentioning that the gradient term is essential to get existence and uniqueness results. In \cite{K} (see also \cite{CT, MST1} for more general data) it is shown that there exist solutions to problem
\begin{equation*}
\left\{\begin{array}{ll}
\displaystyle -\Div\left(\frac{Du}{|Du|}\right)=f(x)&\hbox{ in }\,\Omega\,,\\[3mm]
u=0 &\hbox{ on }\,\partial\Omega\,,
\end{array}\right.
\end{equation*}
only when data are small enough. On the other hand, uniqueness cannot be expected since if $u$ is a solution and $g$ is a real increasing smooth function, then $v=g(u)$ should be a solution as well. Therefore, the total variation term has a regularizing effect.

Our purpose is to go a step further and study problem \eqref{prob-prin} when data are merely integrable functions. This kind of non--variational problems has extensively been studied for problems involving the $p$--Laplacian ($1<p\le N$). In this framework, there are two different formulations: that of entropy solution introduced in \cite{B-V} (see also \cite{BGO}) and that of renormalized solution, for which we refer to \cite{DMOP}. Both approaches systematically use truncations of solutions. In \cite{ABCM}, in the framework of the $1$--Laplacian, the authors also introduce a notion of solution by means of truncations. We follow the same concept, but adapted to our situation. Indeed, since the regularizing effect of the total variation yields $u\big|_{\partial\Omega}=0$, the boundary condition holds in the sense of traces.

Another feature deriving from the regularizing effect is $u\in BV(\Omega)$ without jump part. Nevertheless, this fact does not allow us to define (following Anzellotti, see \cite{An}) the pairing of a general $L^\infty$--divergence--measure vector field $\z$ and the solution $u$. Hence, truncations must remain in the definition of solution. Instead of products of the form $(\z,Du)$, we have to handle with products such as $(\z,De^{-u})$ and $(e^{-u}\z,Du)$. Beyond these kind of technical complication, the existence theorem holds as it was expected, and we will only make explicit those parts of the proof which are different of that for regular data in \cite[Theorem 4.3]{LS}. Much more interesting is the comparison principle. We point out that, even in the context of bounded solutions, its proof is new and simpler than that of the uniqueness result in \cite{MS}. We also investigate solutions when data belong to $L^p(\Omega)$, with $1<p<N$, finding that the solution lies in $L^{\frac{Np}{N-p}}(\Omega)$. Note that Lebesgue spaces continuously adjust with the known cases $p=1$ (in which $u\in BV(\Omega)\subset L^{\frac N{N-1}}(\Omega)$) and $p=N$ (see \cite[Proposition 4.7]{LS}).

This paper is organized as follows. In section 2 we introduce some definitions and notation and we also give some preliminaries results that we will need. Among these results, we foreground Proposition 2.4 for which we supply a new proof. This proposition is essential to deal with pairings involving functions of the solution (such as truncations) through Proposition 2.7. Section 3 is devoted to prove the existence result and the comparison principle. In section 4 we show the best summability that the solution can get when data belong to $L^p(\Omega)$, with $1<p<N$. Finally, in the last section  we show examples of radial solutions, which give evidence that the obtained regularity is optimal.

\section*{Acknowledgements}
This research has been partially supported  by the Spanish Mi\-nis\-te\-rio de Econom\'{\i}a y Competitividad and
FEDER, under project MTM2015--70227--P.
The first author was also supported by Ministerio de Econom\'{\i}a y
Competitividad under grant BES--2013--066655.

The authors would like to thank Salvador Moll for some fruitful discussions concerning this paper.

\section{Preliminaries}

   In this section we will introduce some notation and auxiliary results which will be used
    throughout this paper. In what follows, we will consider $N\ge2$ and, given a set $E$, we will write
   $\h^{N-1}(E)$ to denote its $(N - 1)$--dimensional Hausdorff measure and $|E|$ its Lebesgue measure.

  In this paper, $\Omega$ will always stands for an open subset of
  $\R^N$ with Lipschitz boundary. Thus, an outward normal unit
  vector $\nu(x)$ is defined for $\h^{N-1}$--almost every
  $x\in\partial\Omega$.
 We will make use of the usual Lebesgue and Sobolev
 spaces, denoted by $L^q(\Omega)$  and $W_0^{1,p}(\Omega)$,
 respectively (see for instance \cite{Br} or \cite{Ev}).

We recall that for a Radon measure $\mu$ in $\Omega$ and a Borel set
$A\subseteq\Omega$, the measure $\mu\res A$ is defined by $(\mu\res A)(B)=\mu(A\cap B)$
for any Borel set $B\subseteq\Omega$.

The truncation function will be use throughout this paper. Given
$k>0$, it is defined by
\begin{equation}\label{trun}
    T_k(s)=\min\{|s|, k\}\sg (s)\,,
\end{equation} for all $s\in\R$.

\subsection{Functions of bounded variation}
The natural energy space to study problems involving the $1$--Laplacian is the space of all functions of bounded variation, that is, functions $u\,:\,\Omega\to\R$ belonging to $L^1(\Omega)$ whose derivative in the sense of distributions $Du$ is a Radon measure with finite total variation. This space will be denoted by $BV(\Omega)$.

Let $u \in BV(\Omega)$, we can decompose the Radon measure $Du$ into its absolutely continuous part and its singular part with respect to the Lebesgue measure: $Du=D^au + D^su$. We denote by $S_u$ the set of all $x \in \Omega$ which are not Lebesgue points, that is, $x \not\in S_u$ if there exists $\tilde{u}(x)$ such that
\begin{equation*}
\lim_{\rho \downarrow 0} \frac{1}{|B_{\rho}(x)|} \int_{B_{\rho}(x)} \vert u(y) - \tilde{u}(x) \vert \, dy = 0 \,.
\end{equation*}

We say that $x \in \Omega$ is an approximate jump point of $u$, denoted by $x \in J_u$, if there exist two real numbers $u^+(x)>u^-(x)$ and $\nu_u(x)$ with $|\nu_u(x)|=1$ such that
\begin{equation*}
\lim_{\rho \downarrow 0} \frac{1}{|B_{\rho}^+(x,\nu_u(x))|} \int_{B_{\rho}^+(x,\nu_u(x))} \vert u(y) - u^+(x) \vert \, dy = 0 \,,
\end{equation*}
\begin{equation*}
\lim_{\rho \downarrow 0} \frac{1}{|B_{\rho}^-(x,\nu_u(x))|} \int_{B_{\rho}^-(x,\nu_u(x))} \vert u(y) - u^-(x) \vert \, dy = 0 \,,
\end{equation*}
where
\begin{equation*}
B_{\rho}^+(x,\nu_u(x)) = \{ y \in B_{\rho}(x) \ | \ \langle y - x, \nu_u(x) \rangle >0 \}
\end{equation*}
and
\begin{equation*}
B_{\rho}^-(x,\nu_u(x)) = \{ y \in B_{\rho}(x) \ | \ \langle y - x, \nu_u(x) \rangle <0 \} \,.
\end{equation*}
We know that $S_u$ is countably $\h^{N-1}$--rectifiable and $\h^{N-1}(S_u \backslash J_u) = 0$ by the Federer--Vol'pert Theorem (see \cite[Theorem 3.78]{AFP}). Moreover, we also know
\begin{equation*}
Du \res J_u = (u^+ - u^-) \nu_u \h^{N-1} \res J_u \,.
\end{equation*}
Using $S_u$ and $J_u$, we can split $D^su$ in its jump part $D^j u$ and its Cantor part $D^c u$, defined by
\begin{equation*}
D^ju = D^su \res J_u  \qquad  {\rm and} \qquad D^c u = D^su \res (\Omega \backslash S_u) \,.
\end{equation*}
Then, we have
\begin{equation*}
D^j u = (u^+ - u^-) \nu_u \h^{N-1} \res J_u \,.
\end{equation*}
In addition, if $x \in J_u$, then $\nu_u(x) = \frac{Du}{| D u |}(x)$ where $\frac{Du}{| D u |}$ is the Radon--Nikod\'ym derivative of $Du$ with respect to its total variation $| D u |$.

The precise representative $u^* : \Omega \setminus(S_u \setminus J_u) \rightarrow \R$ of
$u$ is defined by
\begin{equation*}
u^*(x) = \left\{
\begin{array}{lcc}
\tilde{u}(x) & \mbox{if} & x \in \Omega \setminus S_u \,,\\
\dfrac{u^-(x) + u^+(x)}{2} & \mbox{if} & x \in J_u \,.
\end{array}
\right.
\end{equation*}
For the sake of simplicity, most of the time we will denote both function and its precise representative by $u$.

We will use the Chain Rule, but only when $u $ is a bounded variation function without jump part.
\begin{Proposition}\label{Chain-Rule}
Let $u \in BV(\Omega)$ with $D^ju=0$ and let $f$ be a Lipschitz function in $\Omega$. Then, $v=f \circ u$ belongs to $BV(\Omega)$ and $Dv=f^\prime(u) Du$, so that $D^j v=0$.
\end{Proposition}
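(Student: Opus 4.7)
My plan is to prove the chain rule by a double approximation, first on $u$ and then on $f$, with the no-jump hypothesis entering at the key identification step.

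First, I would treat the case $f \in C^1(\R)$ with bounded derivative. Pick a sequence $\{u_n\} \subset C^\infty(\Omega) \cap BV(\Omega)$ converging to $u$ strictly in $BV(\Omega)$ (Anzellotti--Giaquinta). For smooth $u_n$ the classical chain rule gives $D(f \circ u_n) = f'(u_n)\nabla u_n$, hence $|D(f\circ u_n)|(\Omega) \le \|f'\|_\infty\, |Du_n|(\Omega)$. Since $f\circ u_n \to f\circ u$ in $L^1(\Omega)$ by continuity of $f$, lower semicontinuity of the total variation yields $f\circ u \in BV(\Omega)$ and $D(f\circ u_n) \tode D(f\circ u)$ as measures. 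To identify the limit as $f'(u^*)\, Du$, I would apply a Reshetnyak-type continuity argument: for every $\varphi \in C_c(\Omega)$,
$$
\int_\Omega \varphi\, f'(u_n)\, dDu_n \longrightarrow \int_\Omega \varphi\, f'(u^*)\, dDu,
$$
the limit being unambiguous exactly because $D^j u = 0$ ensures that $u^*$ is well defined $|Du|$-a.e.

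Second, for a general Lipschitz $f$ I would mollify: set $f_m = f * \rho_{1/m} \in C^\infty$, with $\mathrm{Lip}(f_m) \le \mathrm{Lip}(f)$, $f_m \to f$ locally uniformly, and $f_m' \to f'$ almost everywhere on $\R$. Step one applied to each $f_m$ gives $D(f_m\circ u) = f_m'(u^*)\, Du$. To pass to the limit I invoke the coarea formula for $BV$ functions without jump part:
$$
|Du|\bigl(\{x \in \Omega : u^*(x) \in E\}\bigr) = \int_E \h^{N-1}(\partial^*\{u > t\} \cap \Omega)\, dt
$$
for every Borel $E \subset \R$, which shows that the pushforward of $|Du|$ by $u^*$ is absolutely continuous with respect to the Lebesgue measure on $\R$. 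Therefore $f_m' \to f'$ $\mathcal{L}^1$-a.e.\ forces $f_m'(u^*) \to f'(u^*)$ $|Du|$-a.e., and dominated convergence yields $D(f\circ u) = f'(u)\, Du$.

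Finally, $D^j v = 0$ is immediate from $Dv = f'(u)\, Du = f'(u) D^a u + f'(u) D^c u$, since the first summand is absolutely continuous and the second is of Cantor type. The main obstacle I anticipate is the identification of the limit measure in Step one with only an $L^\infty$ multiplier $f'(u_n)$ against a strictly convergent sequence of gradient measures: this is where Reshetnyak-type continuity must be invoked, and the no-jump hypothesis on $u$ is essential to remove the ambiguity of the precise representative on sets where $|Du|$ may concentrate.
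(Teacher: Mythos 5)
Your Step~2 (mollifying a general Lipschitz $f$ and using the coarea formula to see that the pushforward $(u^*)_\#|Du|$ on $\R$ is absolutely continuous with respect to $\mathcal L^1$) is correct, and it is indeed exactly where $D^j u = 0$ must enter. The genuine gap is in Step~1, at the identification
\[
\int_\Omega \varphi\,f'(u_n)\,\nabla u_n\,dx \;\longrightarrow\; \int_\Omega \varphi\,f'(u^*)\,dDu\,.
\]
Reshetnyak's continuity theorem requires a single integrand $F(x,\sigma)$, continuous and bounded on $\Omega\times S^{N-1}$, tested against the polar decompositions of a strictly convergent sequence of measures; here the scalar multiplier $f'(u_n(x))$ changes with $n$ and the target multiplier $f'(u^*(x))$ is not continuous in $x$, so the theorem does not apply. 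Worse, the limit of the left-hand side is determined by an elementary computation that exposes a circularity: since $f'(u_n)\nabla u_n = \nabla(f\circ u_n)$ and $f\circ u_n\to f\circ u$ in $L^1(\Omega)$, integration by parts gives
\[
\int_\Omega \varphi\,f'(u_n)\,\nabla u_n\,dx = -\int_\Omega (f\circ u_n)\,\nabla\varphi\,dx \;\longrightarrow\; -\int_\Omega (f\circ u)\,\nabla\varphi\,dx = \int_\Omega \varphi\,dD(f\circ u)\,,
\]
so the weak$*$ limit of $f'(u_n)\nabla u_n\,\mathcal L^N$ is $D(f\circ u)$ automatically, and identifying \emph{that} with $f'(u^*)\,Du$ is precisely the chain rule you set out to prove. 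Mollifying $u$ therefore buys you nothing in Step~1; the difficulty of the Cantor part cannot be resolved this way.

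The robust route for Step~1 is to use the coarea formula directly, not only in the passage from $C^1$ to Lipschitz. Since $D^j u = 0$ forces $\h^{N-1}(S_u)=0$, for $\mathcal L^1$-a.e.\ $t$ one has $u^* = t$ \ $\h^{N-1}$-a.e.\ on $\partial^*\{u>t\}$. Writing a Lipschitz $f$ as a difference of two nondecreasing Lipschitz functions and performing the change of variable $s=f(t)$ in the coarea decomposition $Dv = \int_\R D\car_{\{v>s\}}\,ds$ gives $Dv = \int_\R f'(t)\,D\car_{\{u>t\}}\,dt$; on the other hand, $\int_B f'(u^*)\,dDu = \int_\R f'(t)\,D\car_{\{u>t\}}(B)\,dt$ for every Borel $B\subset\Omega$, by the same no-jump identification of $u^*$ with the level on reduced boundaries, whence $Dv=f'(u^*)\,Du$. (Equivalently, slice to one dimension, where $D^j u = 0$ gives a continuous good representative and the scalar chain rule is elementary.) For the record, the paper states this proposition without proof, referring to the standard texts on $BV$ functions, so there is no in-paper argument to compare against.
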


For further information about bounded variation functions we refer to \cite{AFP}, \cite{EG} and \cite{Zi}.

\subsection{$L^\infty$--divergence--measure fields}
We will denote by $\DM(\Omega)$ the set of all vector fields $\z \in L^\infty(\Omega;\R^N)$ such that $\Div \z$ is a Radon measure in $\Omega$ with finite total variation. Following \cite{ABCM}, we will use these vector fields to give a sense to $\displaystyle \frac{Du}{|Du|}$  in our equation, even if $Du$ is a Radon measure and, moreover, if it vanishes in a zone of the domain. More concretely, we seek for a vector field $\z\in L^\infty(\Omega;\R^N)$ satisfying $\|\z\|_\infty\le1$ and $(\z,DT_k(u))=|DT_k(u)|$ for all $k>0$.

Let $\z \in \DM(\Omega)$ and $w \in BV(\Omega)\cap C(\Omega) \cap L^\infty(\Omega)$; for every $\varphi \in C^\infty_0 (\Omega)$ we define the functional
\begin{equation*}
\langle (\z, Dw),\varphi \rangle = - \int_\Omega w \,\varphi \, \Div \z - \int_\Omega w \, \z \cdot \nabla \varphi \,dx \,.
\end{equation*}
It was proved in \cite{An} that this distribution has order $0$ since satisfies
\[
|\langle (\z, Dw),\varphi\rangle|\le\|\varphi\|_\infty\|\z\|_\infty \int_\Omega|Dw|\,.
\]
Thus, it is actually a Radon measure with finite total variation and the following inequality holds
\begin{equation}\label{des-An}
  |(\z, Dw)|\le \|\z\|_\infty |Dw|
\end{equation}
as measures in $\Omega$. In particular, the Radon measure $(\z, Dw)$ is absolutely continuous with respect to $|Dw|$.
Denoting by $$\theta(\z, Dw, \cdot) : \Omega \rightarrow \R$$ the
Radon--Nikod\'ym derivative of $(\z,Dw)$ with respect to $|Dw|$,
it follows that
\begin{equation*}
\int_B (\z, Dw) = \int_B \theta(\z, Dw, x) \, |Dw| \qquad \mbox{ for all Borel
sets} \quad B \subset \Omega \,,
\end{equation*}
and
\begin{equation*}
\Vert \theta(\z, Dw, \cdot)\Vert_{L^{\infty}(\Omega, \vert Dw \vert)} \leq \Vert \z \Vert_{\infty}\,.
\end{equation*}
Moreover, if $f : \R \rightarrow \R$ \ is a Lipschitz continuous increasing function, then
\begin{equation}\label{E1paring12}
\theta(\z, D(f \circ w),x) = \theta (\z, Dw, x) \qquad  |Dw|\mbox{--a.e.} \quad {\rm in} \quad \Omega \,.
\end{equation}

The Anzellotti theory also provides the definition of a weak trace on $\partial
\Omega$ to the normal component of any vector field $\z\in \DM$, denoted by $[z,\nu]$. This weak trace satisfies $\|[\z,\nu]\|_\infty\le \|\z\|_\infty$. Relating the pairing $(\z, Dw)$ and the weak trace $[z,\nu]$ a Green's formula holds.

\begin{Theorem}\label{Green}
If $\z\in \DM$ and $w\in BV(\Omega)\cap C(\Omega) \cap L^\infty(\Omega)$, then we have
\[
\int_\Omega w\,\Div\z +\int_\Omega (\z, Dw)=\int_{\partial\Omega}w\,[\z,\nu]\, d\h^{N-1}\,.
\]
\end{Theorem}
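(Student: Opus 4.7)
The plan is to follow the classical Anzellotti approximation argument. First, I would recall how the normal trace $[\z,\nu]\in L^\infty(\partial\Omega)$ is defined: for $\varphi\in C^1(\overline\Omega)$ the functional $\varphi\mapsto \int_\Omega \varphi\,\Div\z+\int_\Omega \z\cdot\nabla\varphi\,dx$ vanishes on $C^1_c(\Omega)$ (by the distributional meaning of $\Div\z$) and is therefore well defined on the boundary values; a continuity estimate in the $L^1(\partial\Omega)$--norm (with constant $\|\z\|_\infty$) produces an $L^\infty$ representative on $\partial\Omega$. This already gives the Green formula for $w\in C^1(\overline\Omega)$, and by approximation for $w\in W^{1,1}(\Omega)\cap L^\infty(\Omega)$ with a classical trace.

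Next, for a general $w\in BV(\Omega)\cap C(\Omega)\cap L^\infty(\Omega)$, I would construct a sequence $w_n\in C^\infty(\Omega)\cap W^{1,1}(\Omega)\cap L^\infty(\Omega)$ converging to $w$ strictly in $BV(\Omega)$ and with $\|w_n\|_\infty\le\|w\|_\infty$, keeping the same trace on $\partial\Omega$, via a standard mollification with a boundary‐adapted partition of unity (as in Anzellotti's paper). For these smooth approximants the Green formula reads
\[
\int_\Omega w_n\,\Div\z+\int_\Omega \z\cdot\nabla w_n\,dx=\int_{\partial\Omega} w_n\,[\z,\nu]\,d\h^{N-1}.
\]
The first term passes to the limit since $w_n\to w$ uniformly on compact subsets of $\Omega$ (because $w\in C(\Omega)$) and $|\Div\z|$ is a finite Radon measure with no mass on $\partial\Omega$; the boundary term passes to the limit because $w_n|_{\partial\Omega}\to w|_{\partial\Omega}$ in $L^1(\partial\Omega)$ and $[\z,\nu]\in L^\infty(\partial\Omega)$.

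The main obstacle is the middle term, namely the identification of $\lim_n\int_\Omega \z\cdot\nabla w_n\,dx$ with $\int_\Omega(\z,Dw)$. I would argue that the measures $\z\cdot\nabla w_n\,dx=(\z,Dw_n)$ are uniformly bounded in total variation by $\|\z\|_\infty\int_\Omega|\nabla w_n|$ thanks to the Anzellotti bound \eqref{des-An}, and by the very definition of the pairing as the distribution $\varphi\mapsto -\int w\,\varphi\,\Div\z-\int w\,\z\cdot\nabla\varphi\,dx$ one sees, using $w_n\to w$ in $L^1$ and uniformly on compacta, that $(\z,Dw_n)\to(\z,Dw)$ in $\mathcal D'(\Omega)$; combined with the total variation bound and the strict convergence $\int|\nabla w_n|\to\int|Dw|$, no mass is lost in the limit and one may test against $\varphi\equiv 1$ (approximated by cut‑offs $\chi_n\in C_c^\infty(\Omega)$ with $\chi_n\uparrow 1$, using again that $w\in C(\Omega)$ and that $\Div\z$ has no mass on $\partial\Omega$). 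The absence of the jump part of $Dw$, guaranteed by $w\in C(\Omega)$, is what ensures that the pairing is a genuine Radon measure on $\Omega$ and that the cut‑off procedure converges without residual boundary contributions.
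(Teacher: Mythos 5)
The paper does not give a proof of this theorem: it is quoted verbatim from Anzellotti's theory and attributed to \cite{An}, so there is no in-paper argument to compare against. What you have written is a faithful reconstruction of Anzellotti's original proof (Theorem 1.9 in \cite{An}), and the overall architecture --- define the normal trace via the functional $\varphi\mapsto\int_\Omega\varphi\,\Div\z+\int_\Omega\z\cdot\nabla\varphi$, establish Green's formula for $W^{1,1}\cap L^\infty$, then pass to $BV\cap C\cap L^\infty$ by strict approximation --- is correct.

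Two points deserve more care. First, there is a small internal inconsistency: you claim the approximants $w_n$ ``keep the same trace on $\partial\Omega$'' and then, one sentence later, justify the boundary term by ``$w_n|_{\partial\Omega}\to w|_{\partial\Omega}$ in $L^1(\partial\Omega)$.'' Either one suffices (and in Anzellotti's construction the trace can indeed be preserved exactly), but you should pick one. Second, and more substantively, the justification of $\int_\Omega(\z,Dw_n)\to\int_\Omega(\z,Dw)$ needs to be made a genuine tightness argument rather than an appeal to testing against cut-offs: from $|(\z,Dw_n)|\le\|\z\|_\infty|\nabla w_n|\,dx$, from $|\nabla w_n|\,dx\tode|Dw|$, and from $\int_\Omega|\nabla w_n|\,dx\to\int_\Omega|Dw|$, one deduces that for any $\varepsilon>0$ there is a compact $K\subset\subset\Omega$ (chosen so that $|Dw|(\partial K)=0$) with $\limsup_n\int_{\Omega\setminus K}|\nabla w_n|<\varepsilon$; this uniform ``no mass escapes to $\partial\Omega$'' statement, together with the weak-$*$ convergence $(\z,Dw_n)\tode(\z,Dw)$ you established on $C_c^\infty(\Omega)$, gives convergence of the total integrals. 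The continuity $w\in C(\Omega)$ is used to get uniform convergence of the mollifications on compacta (hence convergence of $\int w_n\,\Div\z$), not in the cut-off step, so your final parenthetical misattributes the role of $w\in C(\Omega)$. With these repairs the proposal is sound and recovers Anzellotti's theorem as the paper uses it.
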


\bigskip

As mentioned, for a general $\z \in \DM(\Omega)$, Anzellotti's theory assumes that $w \in BV(\Omega)\cap C(\Omega) \cap L^\infty(\Omega)$ in order to define $(\z, Dw)$ and to prove a Green's formula. This theory was generalized to consider $w \in BV(\Omega)\cap L^\infty(\Omega)$ in \cite{CF} using a different approach, and in \cite{C} and \cite{MST2} following the same definitions of Anzellotti. Indeed, given $\z \in \DM(\Omega)$ and $w \in BV(\Omega)\cap L^\infty(\Omega)$; for every $\varphi \in C^\infty_0 (\Omega)$ we may define the functional
\begin{equation*}
\langle (\z, Dw),\varphi \rangle = - \int_\Omega w^* \,\varphi \, \Div \z - \int_\Omega w \, \z \cdot \nabla \varphi \,dx \,.
\end{equation*}
We explicitly mention that the precise representative $w^*$ is summable with respect to $\Div\z$ and that this definition depends on the chosen representative of the function.

Now, we present some results which we use several times in the sequel. Next proposition was proved in \cite{MS}.

\begin{Proposition}\label{Prop2.3}
Let $\z \in \DM(\Omega)$ and let $u , w \in BV(\Omega)\cap L^\infty (\Omega)$ be functions such that $D^ju=D^jw=0$. Then
\begin{equation*}
(w\,\z,Du)= w^*(\z,Du) \quad \mbox{as Radon measures in } \; \Omega \,.
\end{equation*}
\end{Proposition}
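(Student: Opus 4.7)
I want to show that the Radon measures $(w\z,Du)$ and $w^*(\z,Du)$ coincide on $\Omega$, by testing both against $\varphi\in C_c^\infty(\Omega)$. The strategy is to first check that $w\z\in\DM(\Omega)$ so the left-hand side is well defined, then establish the identity directly when $w$ is smooth, and finally extend to general $w$ by mollification.

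\textit{Well-definedness and smooth case.} Since $w$ and $\z$ are both bounded, $w\z\in L^\infty(\Omega;\R^N)$. The Leibniz formula
$\Div(w\z)=w^*\,\Div\z+(\z,Dw)$,
valid for $w\in BV\cap L^\infty$ with $D^jw=0$ in the extended Anzellotti framework (C\'aselles--Chen--Frid, as cited), shows $w\z\in\DM(\Omega)$. When $w\in C^\infty(\Omega)\cap L^\infty(\Omega)$ we have $(\z,Dw)=\z\cdot\nabla w\,dx$ and $w=w^*$ pointwise, and for any $\varphi\in C_c^\infty(\Omega)$ a direct computation gives
\begin{align*}
\langle(w\z,Du),\varphi\rangle
&=-\int_\Omega u^*\varphi\,\Div(w\z)-\int_\Omega u\,w\z\cdot\nabla\varphi\,dx\\
&=-\int_\Omega u^*(w\varphi)\,\Div\z-\int_\Omega u\,\z\cdot\nabla(w\varphi)\,dx=\int_\Omega w\varphi\,d(\z,Du),
\end{align*}
where I regrouped $\varphi\nabla w+w\nabla\varphi=\nabla(w\varphi)$ and used $u=u^*$ a.e. Hence $(w\z,Du)=w(\z,Du)$ when $w$ is smooth.

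\textit{Approximation step.} For general $w$, take mollifications $w_n:=w*\rho_n\in C^\infty$ with $\|w_n\|_\infty\le\|w\|_\infty$ and $w_n(x)\to w^*(x)$ at every point of approximate continuity of $w$; the exceptional set $S_w$ is countably $\h^{N-1}$-rectifiable. Applying the smooth case, $(w_n\z,Du)=w_n(\z,Du)$ for every $n$. On the right-hand side, $|Du|(S_w)=0$: indeed, $D^ju=0$ forces $|Du|=|D^au|+|D^cu|$, and neither the Lebesgue-absolutely-continuous part nor the Cantor part can charge the $\h^{N-1}$-$\sigma$-finite set $S_w$; thus $w_n\to w^*$ pointwise $(\z,Du)$-a.e., and bounded dominated convergence yields $\int\varphi w_n\,d(\z,Du)\to\int\varphi w^*\,d(\z,Du)$. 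For the left-hand side one decomposes $\Div(w_n\z)=w_n\Div\z+\z\cdot\nabla w_n$ and shows it converges to $w^*\Div\z+(\z,Dw)=\Div(w\z)$ in a weak sense sufficient to pair with the bounded Borel factor $u^*\varphi$, while $u\,w_n\to u\,w$ in $L^1$ takes care of the remaining term.

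\textit{Main obstacle.} The delicate point is the convergence $\z\cdot\nabla w_n\,dx\to(\z,Dw)$ as Radon measures tested against the possibly discontinuous factor $u^*\varphi$. The hypothesis $D^jw=0$ is essential here, as it prevents $Dw$ from concentrating on $\h^{N-1}$-rectifiable sets where $u^*\varphi$ could have discontinuities, which would otherwise obstruct passage to the limit. This weak continuity of the Anzellotti pairing under smooth approximation, exploiting the no-jump hypothesis on both $u$ and $w$, is precisely the technical heart of the proposition.
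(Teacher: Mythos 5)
The paper does not prove Proposition \ref{Prop2.3}; it simply cites \cite{MS}, so there is no in-paper argument to compare against. On its own merits, your proposal is sound up to the approximation step: the Leibniz formula for $\Div(w\z)$ (which indeed shows $w\z\in\DM(\Omega)$), the computation in the smooth-$w$ case, and the convergence of $\int_\Omega u\,w_n\z\cdot\nabla\varphi\,dx$ and of $\int_\Omega u^*\varphi\,w_n\,\Div\z$ are all correct, the last by dominated convergence using $|\Div\z|\ll\h^{N-1}$ and $\h^{N-1}(S_w)=0$.

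The gap that you flag as ``the technical heart'' is, however, a genuine one, and the heuristic you give for why it should close is not correct. You need
\begin{equation*}
\int_\Omega u^*\varphi\;\z\cdot\nabla w_n\,dx\ \longrightarrow\ \int_\Omega u^*\varphi\,d(\z,Dw)\,,
\end{equation*}
but $\z\cdot\nabla w_n\,dx\rightharpoonup(\z,Dw)$ only weakly-$*$, i.e.\ against continuous test functions, while $u^*\varphi$ is merely bounded Borel. Your explanation---that ``$D^jw=0$ prevents $Dw$ from concentrating on $\h^{N-1}$-rectifiable sets where $u^*\varphi$ could have discontinuities''---conflates the \emph{approximate} discontinuity set $S_u$ (which is indeed rectifiable and, since $D^ju=0$, $\h^{N-1}$-null) with the set of \emph{topological} discontinuities of the precise representative $u^*$. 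Approximate continuity at a Lebesgue point is not continuity: $u^*$ can be discontinuous on a set of positive Lebesgue measure, a set that $|D^aw|$ (hence $|Dw|$, hence possibly $|(\z,Dw)|$) certainly charges. A Portmanteau-type upgrade of weak-$*$ convergence requires $|(\z,Dw)|$-a.e.\ continuity of the integrand together with convergence of total variations, neither of which you establish; $|(\z,Dw)|$-a.e.\ \emph{well-definedness} is not enough.

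To close the gap one would have to be quantitative, e.g.\ rewrite
\[
\int_\Omega u^*\varphi\,\z\cdot\nabla w_n\,dx=\int_\Omega\bigl[(u^*\varphi\z)*\check\rho_n\bigr]\cdot dDw
\]
and prove pointwise convergence of the convolution not only $|D^aw|$-a.e.\ (Lebesgue points take care of that) but also $|D^cw|$-a.e., which calls for a capacity- or fine-continuity-type property of precise representatives of $BV$ functions without jump part. Alternatively, note that for the parallel chain-rule statements the paper deliberately sidesteps exactly this difficulty by arguing at the level of Radon--Nikod\'ym densities through a slicing formula (Propositions \ref{rebanada}--\ref{Prop2.2}), never pushing a bounded Borel factor through a weak-$*$ limit; a similar slicing route on $u$, reducing to level sets $E_{u,t}$ and normal traces of $w\z$ and $\z$ on $\partial^*E_{u,t}$, is a more promising mechanism. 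As written, the decisive limit is asserted but not proved, and the stated reason for it is flawed.
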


In principle, it is not clear that (\ref{E1paring12}) holds in the case that $\z \in \DM(\Omega)$ and
$u \in BV(\Omega) \cap L^{\infty}(\Omega)$. However, we will see that (\ref{E1paring12}) holds if we assume the jump part $D^j u$ vanishes.
This result was proved in \cite{MS} but an extra hypothesis is needed in the proof, namely, the set of discontinuities of $u$ is $\h^{N-1}$--null.
We next prove this result under the general assumption $D^ju=0$.
Following Anzellotti, the main ingredient  to prove the above formula is a
``slicing" result that links the measure $(\z, Du)$ with the
measures $(\z, D\car_{E_{u,t}})$, where $E_{u,t}:= \{ x \in \Omega \ : \ u(x) > t \}$.

\begin{Proposition}\label{rebanada} Let $\z \in \DM(\Omega)$ and consider
 $u \in BV(\Omega) \cap L^{\infty}(\Omega)$ with $D^j u=0$. Let $E_{u,t}:= \{ x \in \Omega \ : \ u(x) > t \}$. Then
 for
all $\varphi \in C^{\infty}_{0}(\Omega)$, the function $t \mapsto
\langle (\z, D\car_{E_{u,t}}), \varphi \rangle$ is $\mathcal L^1$--measurable and
\begin{equation}\label{e2mejora}
\langle (\z, Du), \varphi \rangle = \int_{- \infty}^{+ \infty} \langle (\z, D\car_{E_{u,t}}), \varphi \rangle \, dt\,.
\end{equation}
\end{Proposition}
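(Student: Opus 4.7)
The plan is to start from the $L^\infty$--Anzellotti definition
\[
\langle (\z, Du), \varphi \rangle = -\int_\Omega u^*\varphi\, d\Div \z - \int_\Omega u\,\z\cdot\nabla\varphi\, dx,
\]
substitute into each of its two terms a layer--cake representation of $u$ and of $u^*$ in terms of the level sets $E_{u,t}$, and then interchange the order of integration. Fix $M>0$ with $\|u\|_\infty\le M$. The usual layer--cake identity gives, for every $x\in\Omega$,
\[
u(x)=\int_{-M}^{M}\bigl[\car_{E_{u,t}}(x)-\car_{(-\infty,0)}(t)\bigr]\,dt,
\]
which handles the term $\int_\Omega u\,\z\cdot\nabla\varphi\, dx$ via Fubini.

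The delicate ingredient is to establish the analogous identity for $u^*$ on a set of full $|\Div\z|$--measure. From $D^j u=0$ we get $\h^{N-1}(J_u)=0$, and the Federer--Vol'pert Theorem then yields $\h^{N-1}(S_u)=0$. Since for $\z\in\DM(\Omega)$ the measure $|\Div \z|$ is absolutely continuous with respect to $\h^{N-1}$, we obtain $|\Div\z|(S_u)=0$. At every $x\notin S_u$ the Lebesgue value $\tilde u(x)=u^*(x)$ is defined, and for every $t\ne\tilde u(x)$ small balls around $x$ have Lebesgue density $1$ either in $E_{u,t}$ (when $t<\tilde u(x)$) or in its complement (when $t>\tilde u(x)$). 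Hence $x\notin S_{\car_{E_{u,t}}}$ and $\car_{E_{u,t}}^*(x)=\car_{\{\tilde u(x)>t\}}$. A one--line integration in $t$ then gives
\[
u^*(x)=\int_{-M}^{M}\bigl[\car_{E_{u,t}}^*(x)-\car_{(-\infty,0)}(t)\bigr]\,dt
\qquad\text{for every } x\in\Omega\setminus S_u.
\]

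Plugging both representations into the Anzellotti formula and applying Fubini (the integrands are bounded, $\varphi$ is compactly supported and the measures are finite on $\sop\varphi$) produces
\begin{align*}
\langle (\z, Du), \varphi \rangle
&=\int_{-M}^{M}\Bigl[-\!\int_\Omega \car_{E_{u,t}}^*\varphi\, d\Div \z-\!\int_\Omega \car_{E_{u,t}}\,\z\cdot\nabla\varphi\, dx\Bigr]\,dt\\
&\qquad+\int_{-M}^{0}\Bigl[\int_\Omega\varphi\, d\Div \z+\int_\Omega\z\cdot\nabla\varphi\, dx\Bigr]\,dt.
\end{align*}
The bracket on the second line vanishes because $\varphi\in C_0^\infty(\Omega)$ and the very definition of $\Div \z$ as a distribution gives $\int_\Omega\varphi\, d\Div \z=-\int_\Omega\z\cdot\nabla\varphi\, dx$. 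The bracket on the first line is exactly $\langle (\z, D\car_{E_{u,t}}), \varphi \rangle$, and extending the integral to $(-\infty,+\infty)$ costs nothing because $E_{u,t}$ is either $\Omega$ or empty for $|t|>M$, so the integrand is zero there. The measurability statement for $t\mapsto\langle (\z, D\car_{E_{u,t}}), \varphi \rangle$ is a by--product of the Fubini step.

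The main obstacle is Step two, namely justifying the layer--cake identity for $u^*$ outside a $|\Div\z|$--null set. It rests on two non--trivial facts: the Federer--Vol'pert estimate of $S_u$ under $D^j u=0$, and the absolute continuity $|\Div\z|\ll\h^{N-1}$ for $\z\in\DM(\Omega)$. Once these are combined with the pointwise identification of $\car_{E_{u,t}}^*(x)$ at density points of $u$, the rest of the argument is a routine Fubini computation.
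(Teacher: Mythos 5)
Your proof is correct and takes a genuinely different route from the paper's. Both arguments rest on the same two ingredients --- that $D^j u=0$ forces $\h^{N-1}(J_u)=0$, hence $\h^{N-1}(S_u)=0$ by Federer--Vol'pert, and that $|\Div\z|\ll\h^{N-1}$ --- but they deploy them at different places. You localize at points: from $|\Div\z|(S_u)=0$, at each Lebesgue point $x$ the value $\car^*_{E_{u,t}}(x)$ equals $1$ for $t<\tilde u(x)$ and $0$ for $t>\tilde u(x)$, which gives a pointwise layer--cake formula for $u^*$ directly in terms of the precise representatives $\car^*_{E_{u,t}}$, and one Fubini interchange in the Anzellotti definition of $(\z,Du)$ then yields the slicing identity, with measurability of the slice function coming for free. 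The paper localizes at levels: because $D^ju=0$ the essential boundaries $\partial^* E_{u,t}$ are pairwise $\h^{N-1}$--almost disjoint, so $|\Div\z|(\partial^* E_{u,t})=0$ except for countably many $t$; for such $t$ one may replace $\car^*_{E_{u,t}}$ by $\car_{E_{u,t}}$ in the pairing, reducing it to $-\int_{E_{u,t}}\varphi\,\Div\z-\int_{E_{u,t}}\z\cdot\nabla\varphi\,dx$, after which the classical layer--cake for $u^*$ against $\Div\z$ closes the argument. Your route is conceptually leaner; the paper's records in passing the convenient Gauss--Green form of the level--set pairing. Two small points worth stating explicitly in your write--up: the inner integral in the Fubini step is identified with $\langle(\z,D\car_{E_{u,t}}),\varphi\rangle$ only for those $t$ at which $E_{u,t}$ has finite perimeter, which is $\mathcal L^1$--a.e.\ $t$ by the coarea formula (and that suffices); and the joint measurability needed for Fubini is clear because, on $(\Omega\setminus S_u)\times(-M,M)$ and off a product--null set, the kernel $(x,t)\mapsto\car^*_{E_{u,t}}(x)$ coincides with the indicator of the sub--graph $\{(x,t)\ :\ t<\tilde u(x)\}$, which is measurable since $\tilde u$ is.
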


\begin{proof} First we observe that we may assume $u\ge0$; if not, we consider the function $u+\|u\|_\infty$.

We also point out that for every measurable set $E\subset \Omega$ having finite perimeter, the condition $|\Div\z|(\partial^*E)=0$ implies
\[
\car_E\, \Div\z = \car^*_E\, \Div\z\,.
\]

\bigskip
As a consequence, we obtain the following claim:
\\
{\sl If $E\subset \Omega$ is a measurable set with finite perimeter such that $|\Div\z|(\partial^*E)=0$, then
\[
\langle (\z,D\car_E),\varphi\rangle=-\int_E\varphi\, \Div\z-\int_E\z\cdot\nabla \varphi
\]
for all $\varphi\in C_0^\infty(\Omega)$.}

In what follows, recall that $u$ stands for the precise representative of the $BV$--function.
Observe that, thanks to the coarea formula, the level sets $E_{u,t}$ have finite perimeter for $\mathcal L^1$--almost all $t\in\R$. Moreover, since  $D^j u=0$, it follows that
$$\h^{N-1}\left(\partial^* E_{u,t} \cap \partial^* E_{u,s} \right) = 0\quad \hbox{for } s \ne t\,.$$
Then, applying $\vert \Div(\z) \vert \ll \h^{N-1}$ (by \cite[Proposition 3.1]{CF}), we have
\begin{equation*}
|\Div(\z)|\left(\partial^* E_{u,t} \cap \partial^* E_{u,s} \right) = 0 \quad \hbox{ if} \ \  s \ne t\,.
\end{equation*}
Therefore, there exists $A \subset \R$ numerable such that
\begin{equation*}
|\Div(\z)| \left(\partial^* E_{u,t}\right) = 0 \quad \hbox{if} \ \ t \in \R \backslash A\,.
\end{equation*}
In other words, we have seen that $\vert \Div(\z) \vert\left(\partial^* E_{u,t}\right) = 0$ for $ \mathcal{L}^1$--almost all $t>0$.
Thus, our claim implies that if $\varphi\in C_0^\infty(\Omega)$, then
\begin{equation}\label{ec:1}
\langle (\z,D\car_{E_{u,t}}),\varphi\rangle=-\int_{E_{u,t}}\varphi \, \Div\z-\int_{E_{u,t}}\z\cdot\nabla \varphi \,dx\,,\quad \hbox{for }  \mathcal{L}^1\hbox{--almost all } t>0\,.
\end{equation}

Considering $\varphi\in C_0^\infty(\Omega)$, we apply the slicing formula for integrable functions (see, for instance, \cite[Lemma 1.5.1]{Zi}) and \eqref{ec:1} to get that the function
$$
t \mapsto -\int_{E_{u,t}} \varphi\, \Div\z\, dt-\int_{E_{u,t}} \z\cdot\nabla\varphi \,dx
$$
is $\mathcal L^1$--measurable and
\begin{multline*}
\langle (\z,Du),\varphi\rangle= -\int_\Omega u^*\varphi\, \Div\z-\int_\Omega u\,\z\cdot\nabla\varphi
\\= \int_0^\infty\left[-\int_{E_{u,t}} \varphi\, \Div\z\, dt-\int_{E_{u,t}} \z\cdot\nabla\varphi \,dx\right]\, dt
\\=\int_0^\infty\langle(\z,D\car_{E_{u,t}}),\varphi\rangle\, dt \,,
\end{multline*}
as desired.
\end{proof}

\begin{Proposition}\label{rebanada1} Let $\z \in \DM(\Omega)$ and consider
 $u \in BV(\Omega) \cap L^{\infty}(\Omega)$ with $D^j u=0$. Let $E_{u,t}:= \{ x \in \Omega \ : \ u(x) > t \}$. Then
 for
all Borel set $B\subset\Omega$, the function $t \mapsto
\int_B (\z, D\car_{E_{u,t}})$ is $\mathcal L^1$--measurable and
\begin{equation}\label{e3mejora}
\int_B  (\z, Du) = \int_{- \infty}^{+ \infty} \left[\int_B  (\z, D\car_{E_{u,t}})\right] \, dt.
\end{equation}
\end{Proposition}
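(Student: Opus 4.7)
The plan is to bootstrap the scalar identity of Proposition~\ref{rebanada}, currently only at the level of test functions $\varphi \in C_0^\infty(\Omega)$, up to the characteristic function of an arbitrary Borel set $B \subset \Omega$. The engine is a functional monotone class argument, and it will be driven by a uniform $L^1$-in-$t$ majorant for the total variation $|(\z, D\car_{E_{u,t}})|(\Omega)$.

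First I would produce that majorant. Anzellotti's inequality \eqref{des-An} yields
\[
\bigl|(\z, D\car_{E_{u,t}})\bigr|(\Omega) \le \|\z\|_\infty \, P(E_{u,t}, \Omega),
\]
while the coarea formula gives $\int_{-\infty}^{+\infty} P(E_{u,t}, \Omega)\, dt = |Du|(\Omega) < \infty$. Hence the map $t \mapsto |(\z, D\car_{E_{u,t}})|(\Omega)$ is dominated by an $L^1(\R)$ function. On the $\mathcal{L}^1$-null set of $t$'s for which $E_{u,t}$ is not of finite perimeter or $|\Div\z|$ charges $\partial^* E_{u,t}$, I simply declare $(\z, D\car_{E_{u,t}})$ to be the zero measure, which affects neither measurability nor any $t$-integral.

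Next I would let $\mathcal{H}$ be the class of bounded Borel functions $\varphi : \Omega \to \R$ for which $t \mapsto \langle (\z, D\car_{E_{u,t}}), \varphi \rangle$ is $\mathcal L^1$-measurable and the identity \eqref{e2mejora} is satisfied. Proposition~\ref{rebanada} already puts $C_0^\infty(\Omega)$ inside $\mathcal H$; an immediate uniform-density step promotes this to $C_c(\Omega)$. The family $\mathcal H$ is obviously a vector space, and it is closed under uniformly bounded monotone pointwise convergence: if $\varphi_n \in \mathcal H$ with $0 \le \varphi_n \uparrow \varphi$ and $\|\varphi_n\|_\infty \le M$, dominated convergence against the finite measures $(\z, Du)$ and, for a.e.\ $t$, $(\z, D\car_{E_{u,t}})$ transports the identity to $\varphi$, while the majorant $M\|\z\|_\infty P(E_{u,t},\Omega) \in L^1_t$ legitimises the outer $t$-integral. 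A standard monotone class (Dynkin) argument then shows that $\mathcal H$ contains $\car_B$ for every Borel $B \subset \Omega$, which is precisely \eqref{e3mejora}; the measurability claim is built into membership in $\mathcal H$.

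The only delicate point is bookkeeping: the pairing $(\z, D\car_{E_{u,t}})$ is only defined for $\mathcal L^1$-a.e.\ $t$, so every pointwise-in-$t$ statement must be understood modulo a null set. Beyond that, the argument is routine measure theory, and the real content of the proposition lies in Proposition~\ref{rebanada} together with the coarea bound on $P(E_{u,t},\Omega)$.
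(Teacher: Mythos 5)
Your proof is correct, but it routes through a different piece of measure-theoretic machinery than the paper. The paper's proof first handles measurability on its own, decomposing $(\z, D\car_{E_{u,t}})$ into its positive and negative parts and expressing $t\mapsto \int_B (\z, D\car_{E_{u,t}})^{\pm}$ as a supremum over a countable dense family $S\subset C_0^\infty(\Omega)$; this gives measurability first for $B$ open and then, via regularity of Radon measures, for all Borel $B$. Summability comes, exactly as in your argument, from Anzellotti's bound together with the coarea formula. The identity \eqref{e3mejora} itself is then obtained by observing that the distribution $\mu := (\z,Du) - \int_{-\infty}^{+\infty}(\z,D\car_{E_{u,t}})\,dt$ is a Radon measure which Proposition~\ref{rebanada} forces to vanish on $C_0^\infty(\Omega)$, hence to vanish identically. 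You instead bundle measurability and the identity into a single functional monotone class (Dynkin) argument: your class $\mathcal H$ is a vector space closed under uniformly bounded monotone limits (justified by the same $L^1_t$ majorant $\|\z\|_\infty P(E_{u,t},\Omega)$), it contains the multiplicative family $C_c(\Omega)$, and since $\Omega$ is open and $\sigma$-compact the constant $1$ enters $\mathcal H$ as a monotone limit, so $\mathcal H$ contains every bounded Borel function and in particular every $\car_B$. Both routes lean on the same two ingredients, Proposition~\ref{rebanada} and the coarea bound; yours is a bit more streamlined because the monotone class theorem silently performs the positive/negative-part decomposition and the regularity bootstrapping that the paper spells out by hand, while the paper's version has the mild virtue of avoiding an explicit appeal to the functional monotone class theorem. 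One small presentational point: it is worth saying explicitly, as you implicitly do, that for a bounded Borel $\varphi$ the pairing $\langle(\z,D\car_{E_{u,t}}),\varphi\rangle$ is interpreted as the integral of $\varphi$ against the finite Radon measure $(\z,D\car_{E_{u,t}})$, and that the identity \eqref{e2mejora} you propagate through $\mathcal H$ is the corresponding integral identity.
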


\begin{proof} Let $S$ denote a countable set in $C_0^\infty(\Omega)$ which is dense with respect to the uniform convergence.
Then, for every $t\in\R$ such that $E_{u,t}$ has finite perimeter and for every $\varphi\in C_0^\infty(\Omega)$ with $\varphi\ge0$, it yields
\[
\langle (\z, D\car_{E_{u,t}})^+,\varphi\rangle =\sup\{\langle (\z, D\car_{E_{u,t}}),\psi\rangle\>:\>\psi\in S\,,\ 0\le\psi\le\varphi\}\,.
\]
Thus the positive part of the measure $t\mapsto \langle (\z, D\car_{E_{u,t}})^+,\varphi\rangle $ defines a $\mathcal L^1$--measurable function since it is the supremum of a countable quantity of $\mathcal L^1$--measurable functions. Recalling the Riesz Representation Theorem, we may go further considering an open set $B\subset\Omega$: it follows from
\[
\int_B(\z, D\car_{E_{u,t}})^+=\sup\{\langle (\z, D\car_{E_{u,t}})^+,\psi\rangle\>:\>\psi\in S\,,\ 0\le\psi\le\car_B\}\,,
\]
that $t\mapsto \int_B(\z, D\car_{E_{u,t}})^+ $ defines a $\mathcal L^1$--measurable function. The regularity of the measures lead to the same conclusion for an arbitrary Borel set. This function is $\mathcal L^1$--summable since
\[
\int_B(\z, D\car_{E_{u,t}})^+\le\int_B|(\z, D\car_{E_{u,t}})|\le\|\z\|_\infty\int_B|D\car_{E_{u,t}}|\,,
\]
for $\mathcal L^1$--almost all $t\in\R$,
and $t\mapsto \int_B|D\car_{E_{u,t}}|$ defines an $\mathcal L^1$--summable function, due to the coarea formula.

On the other hand, a similar argument can be done for the negative part of the measures $(\z, D\car_{E_{u,t}})$, so that $t\mapsto \int_B(\z, D\car_{E_{u,t}})^-$ defines an $\mathcal L^1$--summable function for every Borel set $B\subset\Omega$. As a consequence, $t\mapsto \int_B(\z, D\car_{E_{u,t}})$ defines an $\mathcal L^1$--summable function for every Borel set $B\subset\Omega$.

Finally, consider a distribution $\mu$ defined by
\[
\langle\mu,\varphi\rangle=\langle(\z,Du),\varphi\rangle-\int_{-\infty}^{+\infty}\langle(\z,D\car_{E_{u,t}}),\varphi\rangle\, dt\,.
\]
Proposition \ref{rebanada} implies that $\langle\mu,\varphi\rangle=0$ for all $\varphi\in C_0^\infty(\Omega)$, wherewith $\mu$ is a Radon measure which vanishes identically. Therefore, \eqref{e3mejora} holds true.
\end{proof}

\begin{Corollary}
Let $\z \in \DM(\Omega)$ and consider $u \in BV(\Omega) \cap L^{\infty}(\Omega)$ with $D^j u=0$. Then
\begin{equation}\label{e1mejora}
\theta(\z, Du, x) = \theta(\z, D\car_{E_{u,t}},x) \ \ \vert D\car_{E_{u,t}} \vert\hbox{--a.e.  in }  \Omega \; \hbox{ for} \ \mathcal{L}^1\hbox{--almost all} \ t \in \R\,,
\end{equation}
\end{Corollary}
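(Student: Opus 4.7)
The plan is to combine the slicing identity \eqref{e3mejora} of Proposition \ref{rebanada1}, applied to the truncations $T_{a,b}(u)=\min(\max(u,a),b)$, with the generalized coarea formula for $BV$--functions, in order to obtain an identity localised in the level parameter $t$; then a measure--theoretic argument over a countable generating family of Borel sets will produce the pointwise equality of the $\theta$'s.

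Fix a Borel set $B\subset\Omega$ and reals $a<b$. Applying \eqref{e3mejora} to $T_{a,b}(u)$ and noticing that $E_{T_{a,b}(u),t}=E_{u,t}$ for $a<t<b$, while $E_{T_{a,b}(u),t}\in\{\emptyset,\Omega\}$ otherwise, one obtains
\begin{equation*}
\int_B(\z,DT_{a,b}(u))=\int_a^b\!\!\int_B(\z,D\car_{E_{u,t}})\,dt=\int_a^b\!\!\int_B\theta(\z,D\car_{E_{u,t}},\cdot)\,d|D\car_{E_{u,t}}|\,dt.
\end{equation*}
On the other hand, the chain rule (Proposition \ref{Chain-Rule}) gives $DT_{a,b}(u)=\car_{\{a<u^*<b\}}\,Du$, and \eqref{E1paring12} applied to the Lipschitz non--decreasing function $T_{a,b}$ yields $\theta(\z,DT_{a,b}(u),\cdot)=\theta(\z,Du,\cdot)$ $|DT_{a,b}(u)|$--a.e., so that $(\z,DT_{a,b}(u))=\car_{\{a<u^*<b\}}(\z,Du)$ as measures.

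Therefore,
\begin{equation*}
\int_B(\z,DT_{a,b}(u))=\int_B\car_{\{a<u^*<b\}}\,\theta(\z,Du,\cdot)\,d|Du|,
\end{equation*}
and applying the generalized coarea formula to the bounded Borel integrand $\car_B\,\car_{\{a<u^*<b\}}\,\theta(\z,Du,\cdot)$ gives
\begin{equation*}
\int_B(\z,DT_{a,b}(u))=\int_{-\infty}^{+\infty}\!\!\int_B\car_{\{a<u^*<b\}}\,\theta(\z,Du,\cdot)\,d|D\car_{E_{u,t}}|\,dt.
\end{equation*}
The hypothesis $D^ju=0$ implies $\h^{N-1}(S_u)=0$ by Federer--Vol'pert, and consequently the reduced boundary $\partial^*E_{u,t}$ (which supports $|D\car_{E_{u,t}}|$) consists, up to $\h^{N-1}$--null sets, of approximate--continuity points of $u$ at which the limit value is exactly $t$; hence for $\mathcal L^1$--a.e.\ $t$ the factor $\car_{\{a<u^*<b\}}$ coincides with $\car_{(a,b)}(t)$ on $|D\car_{E_{u,t}}|$--a.e.\ point. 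This reduces the last display to $\int_a^b\!\!\int_B\theta(\z,Du,\cdot)\,d|D\car_{E_{u,t}}|\,dt$; comparing with the first display then yields
\begin{equation*}
\int_a^b\!\!\int_B\theta(\z,Du,\cdot)\,d|D\car_{E_{u,t}}|\,dt=\int_a^b\!\!\int_B\theta(\z,D\car_{E_{u,t}},\cdot)\,d|D\car_{E_{u,t}}|\,dt
\end{equation*}
for all $a<b$ and every Borel $B\subset\Omega$.

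Since $a<b$ are arbitrary, the two integrands coincide for $\mathcal L^1$--a.e.\ $t$, with an exceptional null set a priori depending on $B$. Choosing $B$ from a countable algebra $\mathcal A$ generating the Borel $\sigma$--algebra of $\Omega$ fixes, outside a single $\mathcal L^1$--null set of $t$'s, the identity for every $B\in\mathcal A$; the Dynkin $\pi$--$\lambda$ theorem extends it to every Borel $B$, and the uniqueness of Radon--Nikod\'ym derivatives then gives \eqref{e1mejora}. The main technical difficulty I expect is the concentration of $|D\car_{E_{u,t}}|$ on $\{u^*=t\}$ for $\mathcal L^1$--a.e.\ $t$: the assumption $D^ju=0$ is indispensable here, being exactly what allows the truncation factor $\car_{\{a<u^*<b\}}$ inside the inner integral to translate cleanly into the interval indicator $\car_{(a,b)}(t)$ in the variable $t$.
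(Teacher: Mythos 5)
Your proof is essentially the paper's argument: fix $a<b$ and a Borel $B$, invoke the slicing identity \eqref{e3mejora}, rewrite both sides via the coarea formula, compare the two iterated integrals over $t\in(a,b)$, and pass from ``a.e.\ $t$ for each $B$'' to ``a.e.\ $t$ for all $B$'' using a countable generating family. The only cosmetic difference is that you apply \eqref{e3mejora} to the truncation $T_{a,b}(u)$ (together with the chain rule and \eqref{E1paring12}), whereas the paper applies it directly to the Borel set $\{a\le u\le b\}\cap B$; both variants pivot on the same delicate point, namely that for $\mathcal L^1$--a.e.\ $t$ the measure $|D\car_{E_{u,t}}|$ is carried by $\{u^*=t\}$ (a consequence of $D^ju=0\Rightarrow\h^{N-1}(S_u)=0$), a step the paper uses implicitly and you spell out explicitly, as you also do for the final quantifier interchange via a countable $\pi$--system.
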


\begin{proof} Let $a,b\in\R$, with $a<b$ and let $B\subset\Omega$ be a Borel set.
Applying \eqref{e3mejora} to the set $\{x\in\Omega\>:\>a\le u(x)\le b\}\cap B$, we obtain
\begin{equation}\label{basic}
\int_{\{a\le u\le b\}\cap B}(\z,Du)=\int_a^b\left[\int_B(\z,D\car_{E_{u,t}})\right]\, dt\,.
\end{equation}
Now we are analyzing both sides of \eqref{basic}. On the one hand, the coarea formula implies
\begin{multline*}
\int_{\{a\le u\le b\}\cap B}(\z,Du)=\int_{\{a\le u\le b\}\cap B}\theta(\z,Du,x)|Du|
\\=\int_a^b\left[\int_B\theta(\z,Du,x)|D\car_{E_{u,t}}|\right]\, dt\,.
\end{multline*}
On the other,
\[
\int_a^b\left[\int_B(\z,D\car_{E_{u,t}})\right]\, dt=\int_a^b\left[\int_B\theta(\z,D\car_{E_{u,t}},x)|D\car_{E_{u,t}}|\right]\, dt\,.
\]
Hence \eqref{basic} becomes
\[
\int_a^b\left[\int_B\theta(\z,Du,x)|D\car_{E_{u,t}}|\right]\, dt=\int_a^b\left[\int_B\theta(\z,D\car_{E_{u,t}},x)|D\car_{E_{u,t}}|\right]\, dt\,.
\]
It follows that, for $\mathcal L^1$--almost all $t\in\R$,
\[\int_B\theta(\z,Du,x)|D\car_{E_{u,t}}|=\int_B\theta(\z,D\car_{E_{u,t}},x)|D\car_{E_{u,t}}|\]
holds for every Borel set $B$. The desired equality \eqref{e1mejora} is proved.
\end{proof}

\begin{Proposition}\label{Prop2.2} Let $\z \in \DM(\Omega)$ and consider
 $u \in BV(\Omega) \cap L^{\infty}(\Omega)$ with $D^j u=0$. If $f : \R \rightarrow \R$  is a Lipschitz continuous non--decreasing function, then
\begin{equation}\label{E1paring1200}
\theta(\z, D(f \circ u),x) = \theta (\z, Du, x) \quad \vert D(f \circ u) \vert\hbox{--a.e. in }\ \Omega \,.
\end{equation}
\end{Proposition}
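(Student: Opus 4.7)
My plan is to reduce the chain-rule identity for the density $\theta$ to its already-established counterpart for characteristic functions of super-level sets (the preceding Corollary), via a slicing argument comparing $u$ and $v := f\circ u$. By Proposition \ref{Chain-Rule}, $v \in BV(\Omega) \cap L^\infty(\Omega)$ with $D^j v = 0$, so Anzellotti's pairing $(\z, Dv)$ is a Radon measure absolutely continuous with respect to $|Dv|$, with density $\theta(\z, Dv, \cdot)$. By uniqueness of the Radon--Nikod\'ym derivative, it will suffice to verify
\[
\int_B (\z, Dv) = \int_B \theta(\z, Du, x) \, d|Dv|
\]
for every Borel set $B \subset \Omega$.

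Next, I will relate the super-level sets of $u$ and $v$ through the generalized inverse $\tau(s) := \sup\{r \in \R : f(r) \le s\}$. Since $f$ is continuous and non-decreasing, a short argument based on the intermediate value theorem shows that $f(\tau(s)) = s$ for every $s$ in the range of $f$, and consequently $E_{v, s} = E_{u, \tau(s)}$ for such $s$. Values of $s$ outside the range of $f$ give $E_{v, s}$ equal to $\emptyset$ or $\Omega$ up to null sets, and so contribute nothing to the slicing integrals that follow.

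The key technical point---and the one I expect to be the main obstacle---is the following. The preceding Corollary applied to $u$ furnishes an $\mathcal L^1$-null set $N \subset \R$ such that $\theta(\z, D\car_{E_{u, t}}, \cdot) = \theta(\z, Du, \cdot)$ holds $|D\car_{E_{u, t}}|$-a.e.\ whenever $t \notin N$. I must ensure that $A := \{s : \tau(s) \in N\}$ is itself Lebesgue-null. This is precisely where the Lipschitz regularity of $f$ enters: Lipschitz maps satisfy Luzin's (N) property, hence $|f(N)| = 0$; combined with the identity $s = f(\tau(s))$ on the range of $f$, this yields $A \subset f(N)$ and therefore $|A| = 0$. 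A merely continuous non-decreasing $f$ (such as a devil's-staircase) could fail this step, so the Lipschitz hypothesis is really being used here.

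With $|A| = 0$ secured, the proof concludes by slicing. Applying Proposition \ref{rebanada1} to $v$,
\[
\int_B (\z, Dv) = \int_{-\infty}^{+\infty} \int_B (\z, D\car_{E_{v, s}}) \, ds.
\]
For $s \notin A$ lying in the range of $f$, the identification $E_{v, s} = E_{u, \tau(s)}$ together with the preceding Corollary (applied at $t = \tau(s) \notin N$) gives $\int_B (\z, D\car_{E_{v, s}}) = \int_B \theta(\z, Du, x) \, d|D\car_{E_{v, s}}|$. Reassembling via the coarea formula for $v$ with integrand $\theta(\z, Du, \cdot)$, the right-hand side integrates in $s$ to $\int_B \theta(\z, Du, x) \, d|Dv|$, which is the desired identity.
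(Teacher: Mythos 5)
Your proof is correct, but it takes a genuinely different route from the paper. The paper's own argument is a two--step reduction: it first asserts that Anzellotti's Proposition~2.8 (the chain--rule identity for $\theta$ with \emph{strictly increasing} Lipschitz $f$) extends to the present setting $u\in BV(\Omega)\cap L^\infty(\Omega)$, $D^ju=0$, thanks to the slicing Corollary; then, for merely non--decreasing $f$, it perturbs to the increasing function $f_\epsilon(t)=f(t)+\epsilon t$, applies the increasing case to get $(\z,D(f\circ u))+\epsilon(\z,Du)=\theta(\z,Du,x)\bigl(f'(u)+\epsilon\bigr)|Du|$, and lets $\epsilon\to0$. You instead run the slicing argument directly for non--decreasing $f$, replacing Anzellotti's $f^{-1}$ by the generalized inverse $\tau(s)=\sup\{r:f(r)\le s\}$, verifying $E_{f\circ u,\,s}=E_{u,\,\tau(s)}$ and $f(\tau(s))=s$ on the range of $f$, and then invoking Luzin's~(N) property of Lipschitz maps to show the exceptional set $\{s:\tau(s)\in N\}\subset f(N)$ is Lebesgue--null. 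What your version buys is self--containedness and transparency: you do not delegate the increasing case to \cite{An}, you handle both cases uniformly, and you make explicit that the Lipschitz hypothesis is doing real work (through Luzin~(N), which a Cantor--type non--decreasing function would fail) beyond merely ensuring $f\circ u\in BV(\Omega)$. The paper's version is shorter and avoids the generalized--inverse bookkeeping by leaning on Anzellotti plus a one--line perturbation limit. One small point worth making explicit in your write--up: the final ``reassembling'' step needs the measure--theoretic coarea formula $\int_B g\,d|Dv|=\int_{\R}\int_B g\,d|D\car_{E_{v,s}}|\,ds$ for a fixed Borel representative of $g=\theta(\z,Du,\cdot)$, which is legitimate since $\theta(\z,Du,\cdot)$ is bounded and $|Dv|\ll|Du|$ by the Chain Rule.
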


\begin{proof} We may follow Anzellotti (see \cite[Proposition 2.8]{An}) for the case of a increasing function. For the general case, consider $f$ non--decreasing and let $\epsilon>0$. Since the function given by
$t \mapsto f(t)+\epsilon t$ is increasing, it follows that
\begin{multline*}
(\z, D(f \circ u))+\epsilon(\z, Du)=(\z, D((f \circ u)+\epsilon u)) \\ = \theta (\z, Du, x)|D((f \circ u)+\epsilon u)|= \theta (\z, Du, x)(f^\prime( u)+\epsilon) |Du|
\end{multline*}
as measures in $\Omega$. Letting $\epsilon\to0$, we deduce
\begin{equation*}
  (\z, D(f \circ u))=\theta (\z, Du, x) |D(f \circ u)|\quad\hbox{as measures in }\Omega\,.
\end{equation*}
Therefore, we have seen that \eqref{E1paring1200} holds.
\end{proof}

\section{Main results}

In this section, we prove our main results, namely the existence theorem and the comparison principle. We begin by stating our concept of solution to problem \eqref{prob-prin}.
The first difficulty we have to deal with is that we are not able to define the distribution $(\z, Du)$ when data are just integrable functions. Following \cite{ABCM}, we will solve this problem introducing truncations in the concept of solution used in \cite{LS}.

\begin{Definition}\label{def-sol}
We say that $u \in BV(\Omega)$ is a solution to problem \eqref{prob-prin} if $D^ju=0$ and there exists a vector field $\z \in \DM(\Omega)$ with $\|\z\|_\infty \le 1$ such that
\begin{equation}\label{cond-ditribucion}
-\Div \z +|Du| =f \,\text{ in }\,\dis (\Omega)\,,
\end{equation}
\begin{equation}\label{cond-medida}
(\z, DT_k(u))=|DT_k(u)| \,\text{ as measures in }\, \Omega \;\; (\mbox{for every } k >0)\,,
\end{equation}
and
\begin{equation}\label{cond-frontera}
u\big|_{\partial \Omega} = 0\,.
\end{equation}
\end{Definition}

\subsection{Existence Theorem}
\begin{Theorem}
Let $\Omega$ be an open and bounded subset of $\R^N$ with Lipschitz boundary and let $f$ be a non--negative function in $L^1(\Omega)$. Then, problem \eqref{prob-prin} has at least one solution.
\end{Theorem}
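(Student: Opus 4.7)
The plan is to proceed by approximation and compactness. Set $f_n = T_n(f) \in L^\infty(\Omega)$, so that $0 \le f_n \le f$ and $f_n \to f$ in $L^1(\Omega)$, and apply \cite[Theorem 4.3]{LS} to obtain bounded solutions $u_n \in BV(\Omega) \cap L^\infty(\Omega)$ with $D^j u_n = 0$ and associated vector fields $\z_n \in \DM(\Omega)$, $\|\z_n\|_\infty \le 1$, verifying $-\Div \z_n + |Du_n| = f_n$ in $\dis(\Omega)$, $(\z_n, DT_k(u_n)) = |DT_k(u_n)|$ for every $k>0$, and $u_n\big|_{\partial\Omega} = 0$.

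The key a priori estimate comes from inserting $w_n := 1 - e^{-u_n}$ as test function in the equation via Green's formula (Theorem \ref{Green}). Since $D^j u_n = 0$, the chain rule (Proposition \ref{Chain-Rule}) gives $D w_n = e^{-u_n} Du_n$, and the condition $(\z_n, DT_k(u_n)) = |DT_k(u_n)|$ for large $k$ forces $\theta(\z_n, Du_n, \cdot) \equiv 1$ with respect to $|Du_n|$; Proposition \ref{Prop2.2} applied to $s \mapsto 1-e^{-s}$ then yields $(\z_n, D w_n) = e^{-u_n}|Du_n|$. The vanishing boundary trace of $w_n$ produces the clean identity $\int_\Omega |Du_n| = \int_\Omega f_n(1-e^{-u_n}) \le \|f\|_{L^1(\Omega)}$, which combined with the Poincar\'e inequality for zero--trace $BV$--functions bounds $\{u_n\}$ uniformly in $BV(\Omega) \cap L^{N/(N-1)}(\Omega)$.

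Up to a subsequence, $u_n \to u$ in $L^1(\Omega)$ and a.e.\ with $u \in BV(\Omega)$, while $\z_n \tode \z$ in $L^\infty(\Omega;\R^N)$ with $\|\z\|_\infty \le 1$. Passing to the distributional limit yields $-\Div\z + \mu = f$, where $\mu$ is the weak* limit of $|Du_n|$; the identification $\mu = |Du|$ (which gives (\ref{cond-ditribucion})) follows from lower semicontinuity $|Du| \le \mu$ together with the matching--energy bound obtained by testing the limit equation with $1 - e^{-T_k(u)}$ and letting $k \to \infty$. The truncation identity (\ref{cond-medida}) is then obtained by passing to the limit in $\int_\Omega \varphi\,|DT_k(u_n)| = \int_\Omega \varphi\,(\z_n, DT_k(u_n))$ against non--negative $\varphi \in C_0^\infty(\Omega)$: the right--hand side converges using strong $L^q$ convergence of $T_k(u_n)$ and weak* convergence of $\z_n$, while lower semicontinuity bounds $|DT_k(u)|$ from below by the liminf of $|DT_k(u_n)|$; equality then follows from the universal inequality $(\z, DT_k(u)) \le |DT_k(u)|$. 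The boundary condition (\ref{cond-frontera}) follows from a Green's formula argument applied to the limit equation together with the energy identity $\int_\Omega |Du| = \int_\Omega f - \int_{\partial\Omega}[\z,\nu]\, d\h^{N-1}$.

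The main obstacle is verifying $D^j u = 0$, since this property is \emph{not} stable under $BV$--weak* convergence. The strategy is to exploit the already established (\ref{cond-medida}): at any hypothetical point of $J_u$, the equality $(\z, DT_k(u)) = |DT_k(u)|$, interpreted on the jump via the $BV\cap L^\infty$ extension of Anzellotti's pairing (\cite{CF,C,MST2}), combined with $\|\z\|_\infty \le 1$ forces $[\z,\nu_u]^\pm = 1$ \ $\h^{N-1}$--a.e.\ on $J_u$, so that the $\h^{N-1}$--singular part of $\Div\z$ on $J_u$ vanishes. Since $f \in L^1(\Omega)$ has no $\h^{N-1}$--singular part, the equation $|Du| = f + \Div\z$ then yields $(u^+ - u^-)\,\h^{N-1}\res J_u = 0$, i.e.\ $D^j u = 0$.
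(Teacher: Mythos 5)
Your global plan — truncate $f$, invoke \cite[Theorem 4.3]{LS} to get bounded solutions $u_n$ with fields $\z_n$, extract weak limits — is the same as the paper's, and the a priori estimate $\int_\Omega|Du_n|=\int_\Omega f_n(1-e^{-u_n})\le\|f\|_{L^1}$ obtained by testing with $1-e^{-u_n}$ is correct. However, there is a genuine gap at the very step the paper's proof is designed to handle: the passage to the limit in the pairing identity. You propose to deduce \eqref{cond-medida} by letting $n\to\infty$ in $\int_\Omega\varphi\,(\z_n,DT_k(u_n))$ ``using strong $L^q$ convergence of $T_k(u_n)$ and weak\textsuperscript{*} convergence of $\z_n$.'' Unfolding the pairing, this term contains
\begin{equation*}
-\int_\Omega T_k(u_n)^*\,\varphi\,\,\Div\z_n
= \int_\Omega T_k(u_n)\,\varphi\,f_n\,dx-\int_\Omega T_k(u_n)^*\,\varphi\,|Du_n|\,,
\end{equation*}
and the last integral is the product of a merely weak\textsuperscript{*}-convergent sequence of measures $|Du_n|$ against the bounded but only a.e.\ convergent integrands $T_k(u_n)^*\varphi$. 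This does \emph{not} converge in general, and nothing in your sketch supplies the missing compactness. The paper circumvents precisely this obstruction by replacing $(\z,Du)$ with the pairing $(e^{-u}\z,Du)$: since $-\Div(e^{-u}\z)=e^{-u}f$ lies in $L^1(\Omega)$ rather than being a genuine measure, this pairing is a well-defined Radon measure, and \eqref{cond-medida} is derived from the inequality $|De^{-u}|\le(e^{-u}\z,Du)$ (imported from \cite{LS}) together with Proposition \ref{Prop2.3} and the Chain Rule, not by direct limit-taking.

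A secondary concern is your treatment of $D^ju=0$. You derive it \emph{after} \eqref{cond-medida}, by reading the jump contribution of $(\z,DT_k(u))=|DT_k(u)|$ through the $BV\cap L^\infty$ extension of Anzellotti's pairing and forcing $[\z,\nu_u]^\pm=1$ on $J_u$. The idea is interesting, but it inherits the gap above (you cannot yet assert \eqref{cond-medida}), and it silently reverses the order of dependencies used in the paper: there, $D^ju=0$ is carried over from the approximation in \cite{LS} and is \emph{used} to make sense of the pairings (Proposition \ref{Prop2.3} requires $D^ju=0$), so deriving it last would make the earlier steps ill-founded. Finally, your treatment of \eqref{cond-frontera} (``follows from a Green's formula argument together with the energy identity'') glosses over the actual mechanism, which in the paper uses the auxiliary primitive $J_1(s)=\int_0^s T_1(\sigma)\,d\sigma$ and the sign structure of the boundary integrands to conclude that $J_1(u)$ vanishes $\h^{N-1}$-a.e.\ on $\partial\Omega$.
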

\begin{proof}
The same proof of \cite[Theorem 4.3]{LS} works with minor modifications. Nevertheless, some remarks are in order.

The first remark is concerning the pairing $(e^{-u}\,\z,Du)$. If $u$ is integrable with respect to the measure $\Div(e^{-u}\z)$ and $\varphi\in C_0^\infty(\Omega)$, then the integrals
\begin{equation*}
 \int_\Omega \varphi \, u \, \Div(e^{-u}\z)\quad \hbox{and}\quad \int_\Omega u\,e^{-u}\z \cdot \nabla \varphi \, dx
\end{equation*}
are both finite; notice that the second integral is bounded due to the inequality $u\,e^{-u}\le e^{-1}$. Therefore,
\begin{equation*}
 \langle (e^{-u}\,\z,Du),\varphi\rangle =-\int_\Omega \varphi \, u \, \Div(e^{-u}\z)- \int_\Omega u\,e^{-u}\z \cdot \nabla \varphi \, dx
\end{equation*}
is a well--defined distribution (although the distribution $(\z, Du)$ is not). Moreover, we may apply the Anzellotti procedure and obtain a Radon measure.

Taking this fact in mind, we may follow the proof of \cite[Theorem 4.3]{LS}. Starting from suitable approximating problems, we get a limit of the approximate solutions $u\in BV(\Omega)$ such that $D^ju=0$. In addition, we also get a vector field $\z\in \DM (\Omega)$ such that $\|\z\|_\infty\le1$. Moreover, the equation
\eqref{cond-ditribucion} holds and
\begin{equation*}
  -\Div(e^{-u}\z)=e^{-u}f\,.
\end{equation*}
This last equality implies that $u$ is integrable with respect to the measure $\Div(e^{-u}\z)$ and so $(e^{-u}\,\z,Du)$ is a Radon measure.

Two conditions of Definition \ref{def-sol} must still be proved, namely, \eqref{cond-medida} and \eqref{cond-frontera}. We begin by seeing
\begin{equation}\label{medidas-T_k}
(\z, DT_k(u))=|DT_k(u)| \,\text{ as measures in }\, \Omega
\end{equation}
for every $k >0$.

To see \eqref{medidas-T_k} we start with the following equality as measures (proved in \cite[Theorem 4.3]{LS}):
\begin{equation}\label{des-previa}
|De^{-u}| \le (e^{-u}\z,Du)\,.
\end{equation}
First, we will show
\begin{equation*}
|De^{-T_k(u)}| \le (e^{-u}\z,DT_k(u)) \,.
\end{equation*}
On the one hand, considering the restriction to the set $\{ u \ge k\}$ we have
\begin{equation*}
|De^{-T_k(u)}|\res{\{ u \ge k\}} = e^{-T_k(u)}|DT_k(u)|\res{\{ u \ge k\}} = 0 \,,
\end{equation*}
and on the other hand
\begin{equation*}
 | (e^{-u}\z,DT_k(u))|\res{\{ u \ge k\}}\le  |DT_k(u))|\res{\{ u \ge k\}}= 0 \,.
\end{equation*}
Now, we just work with the restriction to the set $\{ u < k\}$. For every $\varphi\in C_0^\infty(\Omega)$ such that $\varphi\ge0$, using the definition of the distribution and applying \eqref{des-previa} we arrive at
\begin{multline*}
\langle (e^{-u}\z,DT_k(u))\res{\{ u < k\}}, \varphi \rangle =-\int_{\{ u < k\}} \varphi\,u\,\Div(e^{-u}\z) - \int_{\{ u < k\}}u\,e^{-u} \z\cdot\nabla \varphi \,dx
\\=\langle (e^{-u}\z,Du)\res{\{ u < k\}}, \varphi \rangle \ge \int_{\{ u < k\}} \varphi \,|De^{-u}|
\\ = \int_{\Omega} \varphi \,e^{-u} |DT_k(u)| = \int_{\Omega}\varphi \, |De^{-T_k(u)}| \,.
\end{multline*}

Now, we have to prove that $(\z,DT_k(u)) = |DT_k(u)|$ as measures in $\Omega$.
We use Proposition \ref{Prop2.3} and the Chain Rule to get
\begin{equation*}
|De^{-T_k(u)}|\le (e^{-u}\z , DT_k(u)) =e^{-u}(\z , DT_k(u)) \le e^{-u}|DT_k(u)| = |De^{-T_k(u)}| \,.
\end{equation*}
Then, the inequality becomes equality and $e^{-u}(\z,DT_k(u)) = e^{-u}|DT_k(u)|$ as measures in $\Omega$. We deduce that
\begin{equation*}
(\z,DT_k(u)) = |DT_k(u)|
\end{equation*}
as measures in $\Omega$, since $e^{-u} =0$ yields $T_k(u)=k$ for every $k >0$.

To check the boundary condition \eqref{cond-frontera} we consider the real function defined by
\begin{equation*}
J_1(s)=\int_0^sT_1(\sigma)\, d\sigma\,.
\end{equation*}
Then, in the same way than in \cite[Theorem 4.3]{LS}, we obtain
\begin{multline}\label{des-front}
\int_\Omega|DT_1(u)|+\int_{\partial\Omega}|T_1(u)|\, d\h^{N-1}+\int_\Omega|DJ_1(u)|+\int_{\partial\Omega}|J_1(u)|\, d\h^{N-1}\\
\le \int_\Omega f\,T_1(u)\, dx\,.
\end{multline}
Using the equation and the previous step, and applying the Green's formula and the Chain Rule, we get
\begin{multline*}
\int_\Omega f\,T_1(u)\, dx=-\int_\Omega T_1(u)\,\Div\z+\int_\Omega T_1(u)|Du|
\\=\int_\Omega(\z,DT_1(u))-\int_{\partial\Omega}T_1(u)[\z,\nu]\, d\h^{N-1}+\int_\Omega |DJ_1(u)|
\\=\int_\Omega|DT_1(u)|-\int_{\partial\Omega}T_1(u)[\z,\nu]\, d\h^{N-1}+\int_\Omega |DJ_1(u)|\,.
\end{multline*}
Going back to \eqref{des-front} and simplifying, it follows that
\begin{equation*}
  \int_{\partial\Omega}|T_1(u)|+T_1(u)[\z,\nu]\, d\h^{N-1}+\int_{\partial\Omega}|J_1(u)|\, d\h^{N-1}\le0\,.
\end{equation*}
Observe that both integrals are non--negative, so that both vanish. In particular, $J_1(u)=0$ $\h^{N-1}$--a.e. on $\partial\Omega$.
Therefore, the boundary condition holds true.
\end{proof} 

\subsection{Comparison principle}

Before proving the comparison principle we need to present some preliminary results.
\begin{Proposition}\label{prop-medidas}
Let $\z$ be a vector field in $\DM(\Omega)$ and let $u$ be a function of bounded variation with $D^ju=0$ and such that $(\z,DT_k(u))=|DT_k(u)|$ for every $k>0$. If $g:\Omega \to \R$ is a bounded, increasing and Lipschitz function, then  $(\z,Dg(u))=|Dg(u)|$ holds as measures.
\end{Proposition}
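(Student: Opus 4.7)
My plan is to reduce the identity to the already-treated bounded case via truncations, invoking the density chain rule (Proposition~\ref{Prop2.2}) together with the standing hypothesis at each truncation level, and then to pass to the limit as the truncation level tends to infinity.

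First I would verify that $(\z, Dg(u))$ is a well-defined Radon measure on $\Omega$. Because $g$ is bounded and Lipschitz, the Chain Rule (Proposition~\ref{Chain-Rule}) gives $g(u) \in BV(\Omega) \cap L^\infty(\Omega)$ with $Dg(u) = g'(u)\,Du$ and $D^j g(u) = 0$, so the extended Anzellotti pairing applies.

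Next I would fix $M > 0$ and work with $T_M(u) \in BV(\Omega) \cap L^\infty(\Omega)$, which inherits $D^j T_M(u) = 0$. Proposition~\ref{Prop2.2}, applied to $T_M(u)$ in place of $u$ and to the Lipschitz non--decreasing function $g$, gives
\[
\theta(\z, Dg(T_M(u)), x) = \theta(\z, DT_M(u), x) \qquad |Dg(T_M(u))|\text{-a.e. in } \Omega.
\]
The hypothesis $(\z, DT_M(u)) = |DT_M(u)|$ reads $\theta(\z, DT_M(u), x) = 1$ for $|DT_M(u)|$-a.e.\ $x$, and the chain-rule identity $Dg(T_M(u)) = g'(T_M(u))\,DT_M(u)$ shows $|Dg(T_M(u))| \ll |DT_M(u)|$; hence the equality $\theta = 1$ transfers $|Dg(T_M(u))|$-a.e.\ and
\[
(\z, Dg(T_M(u))) = |Dg(T_M(u))| \qquad \text{as measures in } \Omega, \text{ for every } M > 0.
\]

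Finally I would let $M \to \infty$, restricting, if necessary, to a sequence that avoids the at most countably many values $t$ with $|Du|(\{u^* = t\}) > 0$ (so that no spurious boundary mass is created). For any $\varphi \in C_0^\infty(\Omega)$ the defining identity
\[
\langle(\z, Dg(T_M(u))), \varphi\rangle = -\int_\Omega g(T_M(u))^*\,\varphi\,d\Div\z - \int_\Omega g(T_M(u))\,\z \cdot \nabla\varphi\,dx
\]
converges to $\langle(\z, Dg(u)), \varphi\rangle$ by dominated convergence with uniform bound $\|g\|_\infty$: the second integral is immediate, while for the first I use $|\Div \z| \ll \h^{N-1}$ (by \cite[Prop.~3.1]{CF}) together with $\h^{N-1}(S_u) = 0$ (forced by $D^j u = 0$), so $u^*$ is $\h^{N-1}$-a.e.\ finite and $g(T_M(u))^*(x) = g(T_M(u^*(x))) \to g(u^*(x)) = g(u)^*(x)$ by continuity of $g$. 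On the right-hand side, the chain rule gives $|Dg(T_M(u))| = g'(u)\,\chi_{\{|u^*| < M\}}|Du| \uparrow g'(u)|Du| = |Dg(u)|$, so monotone convergence yields $\int_\Omega \varphi\,d|Dg(T_M(u))| \to \int_\Omega \varphi\,d|Dg(u)|$. Matching limits with the truncated identity produces $(\z, Dg(u)) = |Dg(u)|$. The hardest part of the plan is precisely this convergence of the precise representatives against $\Div\z$: it relies both on $|\Div\z| \ll \h^{N-1}$ and on the $\h^{N-1}$-a.e.\ finiteness of $u^*$ guaranteed by the jump-free hypothesis $D^j u = 0$.
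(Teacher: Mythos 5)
Your proposal is correct and follows essentially the same route as the paper: apply Proposition~\ref{Prop2.2} at each truncation level $T_M(u)$ to transfer the hypothesis $\theta(\z,DT_M(u),\cdot)=1$ to $\theta(\z,Dg(T_M(u)),\cdot)=1$, and then let $M\to\infty$. The paper simply cites ``the Dominated Convergence Theorem'' for the limit passage, while you usefully spell out the subtleties it hides --- the absolute continuity $|\Div\z|\ll\h^{N-1}$, the $\h^{N-1}$--negligibility of $S_u$ forced by $D^j u=0$, and the convergence $g(T_M(u))^*\to g(u)^*$ of precise representatives against $\Div\z$ together with the monotone convergence $|Dg(T_M(u))|\uparrow|Dg(u)|$ on the other side.
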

\begin{proof}
Since $(\z,DT_k(u))=|DT_k(u)|$, the Radon--Nikod\'ym derivative of $(\z,DT_k(u))$ with respect its total variation $|DT_k(u)|$ is $\theta(\z,DT_k(u),x) =1$. Moreover, using Proposition \ref{Prop2.2} we get
\begin{equation*}
\theta(\z,Dg(T_k(u)),x)=\theta(\z,DT_k(u),x)=1 \,,
\end{equation*}
that is, $(\z,Dg(T_k(u)))=|Dg(T_k(u))|$ for every $k>0$. Now, by the Dominated Convergence Theorem, we take limits in this expression when $k$ goes to $\infty$ and it leads to
\begin{equation*}
(\z, Dg(u)) = g'(u)\,|Du| = |Dg(u)| \,.
\end{equation*}
\end{proof}

\begin{Proposition}\label{prop-1}
Let $f \in L^1(\Omega)$. If $u \in BV(\Omega)$ is a solution to problem \eqref{prob-prin} and $\z \in \DM(\Omega)$ is the associated vector field, then the following equality holds:
\begin{equation*}
-\Div(e^{-u}\z)=e^{-u}f  \,\text{ in }\,\dis (\Omega)\,.
\end{equation*}
\end{Proposition}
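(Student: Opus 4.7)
The plan is to combine Anzellotti's product rule for $\Div(e^{-u}\z)$ with the distributional equation \eqref{cond-ditribucion}, and to identify the pairing $(\z, De^{-u})$ through Proposition \ref{prop-medidas}.

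First, since $f\ge 0$ forces $u\ge 0$ (hence $0<e^{-u}\le 1$), I would apply Proposition \ref{Chain-Rule} with a bounded Lipschitz extension to $\R$ of $s\mapsto e^{-s}$ (for instance $s\mapsto e^{-s^{+}}$) to obtain $e^{-u}\in BV(\Omega)\cap L^\infty(\Omega)$ with $D^{j}(e^{-u})=0$ and $De^{-u}=-e^{-u}\,Du$ as Radon measures.

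Next, the main step is the computation of $(\z, De^{-u})$. I would apply Proposition \ref{prop-medidas} to the bounded, increasing, Lipschitz function $g(s)=1-e^{-s^{+}}$: since \eqref{cond-medida} provides $(\z,DT_k(u))=|DT_k(u)|$ for every $k>0$, the proposition yields $(\z,Dg(u))=|Dg(u)|$. Using $u\ge 0$ to write $g(u)=1-e^{-u}$ and $Dg(u)=-De^{-u}$, this reads $(\z,De^{-u})=-|De^{-u}|=-e^{-u}|Du|$ as Radon measures.

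Finally, the Anzellotti integration-by-parts formula applied to $w=e^{-u}\in BV(\Omega)\cap L^\infty(\Omega)$ produces the product rule
$$\Div(e^{-u}\z)=(e^{-u})^{*}\Div\z+(\z,De^{-u})\qquad\text{as measures in }\Omega.$$
Inserting $\Div\z=|Du|-f$ from \eqref{cond-ditribucion} and the identity from the previous step gives
$$\Div(e^{-u}\z)=(e^{-u})^{*}|Du|-e^{-u}f-e^{-u}|Du|=-e^{-u}f,$$
where $(e^{-u})^{*}|Du|=e^{-u}|Du|$ as measures because $D^{j}u=0$ guarantees that the precise representative is defined $|Du|$--a.e.\ and $(e^{-u})^{*}f=e^{-u}f$ as $L^{1}$ functions.

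The chief obstacle is the second step: a direct appeal to Proposition \ref{Prop2.3} would require the pairing $(\z, Du)$, which is not defined for unbounded $u$. Proposition \ref{prop-medidas}, built from the truncation-level identity \eqref{cond-medida} together with a bounded, increasing, Lipschitz composition, is precisely the tool that carries the equality $(\z, D\cdot)=|D\cdot|$ from the truncations of $u$ down to $e^{-u}$, after which the identity $-\Div(e^{-u}\z)=e^{-u}f$ becomes a bookkeeping exercise with the product rule.
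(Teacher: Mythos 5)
Your proof is correct and follows essentially the same route as the paper: both hinge on Proposition \ref{prop-medidas} applied to the bounded increasing Lipschitz function $g(s)=1-e^{-s}$ (equivalently $1-e^{-s^+}$ for $s\ge 0$) to identify $(\z,De^{-u})=-e^{-u}|Du|$, and then use the distributional equation $\Div\z=|Du|-f$ together with the Anzellotti pairing/Leibniz rule for $\Div(e^{-u}\z)$ to cancel the $|Du|$ terms. The paper phrases the last step by testing the equation against $e^{-u}\varphi$ and unfolding the definition of $(\z,De^{-u})$, while you invoke the product rule $\Div(e^{-u}\z)=(e^{-u})^*\Div\z+(\z,De^{-u})$ directly; these are the same algebra presented in two orders.
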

\begin{proof}
Let $\varphi \in C_0^\infty(\Omega)$, we take the test function $e^{-u}\varphi$ in problem \eqref{prob-prin} and we obtain
\begin{equation*}
-\int_\Omega e^{-u}\varphi\, \Div \z + \int_\Omega e^{-u}\varphi \,|Du| = \int_\Omega e^{-u}\varphi f \,dx \,.
\end{equation*}
Now, since $e^{-u}$ is bounded, we can use the definition of pairing $(\z,De^{-u})$ and the former equality becomes
\begin{equation*}
\int_\Omega e^{-u} \z \cdot \nabla \varphi \,dx +\int_\Omega \varphi\,(\z , De^{-u})+\int_\Omega e^{-u}\varphi \,|Du| =\int_\Omega e^{-u}\varphi f \,dx \,.
\end{equation*}
Finally, using $(\z,De^{-u})=-e^{-u}|Du|$ (see Proposition \ref{prop-medidas}) and Green's formula we deduce
\begin{equation*}
-\Div(e^{-u}\z)=e^{-u}f \,\text{ in }\,\dis (\Omega)\,.
\end{equation*}
\end{proof}

\begin{Theorem}
Let $f_1$ and $f_2$ be two non--negative functions in $ L^1(\Omega)$ with $f_1\le f_2$, and consider problems
\begin{equation}\label{prob-u1}
\left\{\begin{array}{ll}
\displaystyle -\Div\left(\frac{Du_1}{|Du_1|}\right)+|Du_1|=f_1(x)&\hbox{ in }\,\Omega\,,\\[3mm]
u_1=0 &\hbox{ on }\,\partial\Omega\,,
\end{array}\right.
\end{equation}
and
\begin{equation}\label{prob-u2}
\left\{\begin{array}{ll}
\displaystyle -\Div\left(\frac{Du_2}{|Du_2|}\right)+|Du_2|=f_2(x)&\hbox{ in }\,\Omega\,,\\[3mm]
u_2=0 &\hbox{ on }\,\partial\Omega\,.
\end{array}\right.
\end{equation}
If $u_1$ is a solution to problem \eqref{prob-u1} and $u_2$ is a  solution to problem \eqref{prob-u2}, then $u_1\le u_2$.
\end{Theorem}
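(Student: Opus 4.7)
My strategy is the exponential substitution $v_i = e^{-u_i}$. By Proposition \ref{prop-1}, each solution satisfies
\[
-\Div V_i = v_i f_i \qquad \text{in } \dis(\Omega),
\]
with $V_i := v_i\z_i\in\DM(\Omega)$ and $|V_i|\le v_i\le 1$. The nonnegativity of $u_i$ (inherited from the construction with $f_i\ge 0$) combined with the Chain Rule (Proposition \ref{Chain-Rule}) gives $v_i\in BV(\Omega)\cap L^\infty(\Omega)$, $D^jv_i=0$, and $v_i|_{\partial\Omega}=1$. Since $u_1\le u_2$ is equivalent to $v_1\ge v_2$, it suffices to prove that $A := \{v_2>v_1\}$ has zero Lebesgue measure. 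Assume toward a contradiction $|A|>0$.

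The central pointwise identity is $(\z_i, Dv_i)=-|Dv_i|$, obtained from the condition $(\z_i, DT_k(u_i))=|DT_k(u_i)|$ together with Proposition \ref{prop-medidas} applied to the bounded increasing Lipschitz function $g(s)=-e^{-s}$ on $[0,\infty)$. Subtracting the $v$-equations gives $-\Div(V_1-V_2)=v_1f_1-v_2f_2$. I would then apply Green's formula on $A$ itself: since $\phi := (v_2-v_1)^+\in BV\cap L^\infty$ (inheriting zero jump part and zero trace on $\partial\Omega$), the set $A$ has finite perimeter and $\h^{N-1}(A\cap\partial\Omega)=0$. Thus
\[
\int_A(v_2f_2-v_1f_1)\,dx \;=\; \int_{\partial^*A}[V_1-V_2,\nu_A]\,d\h^{N-1}.
\]

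The two sides have controllable signs. The left-hand side is $\ge 0$ because $v_1<v_2$ and $0\le f_1\le f_2$ force $v_1f_1\le v_2f_1\le v_2f_2$ on $A$. For the right-hand side, trace continuity ($D^jv_i=0$) implies $v_1=v_2=:v$ on $\partial^*A$, so (by the trace-locality derived from Proposition \ref{Prop2.3}) one has $[V_1-V_2,\nu_A]=v\,[\z_1-\z_2,\nu_A]$ on $\partial^*A$. Combining $(\z_i,Dv_i)=-|Dv_i|$, the bound $|(\z_i,Dv_j)|\le|Dv_j|$, and the fact that $\nu_A$ is (weakly) oriented along $D(v_1-v_2)/|D(v_1-v_2)|$, one gets
\[
[\z_1,\nu_A]\;\le\;\frac{|Dv_2|-|Dv_1|}{|D(v_1-v_2)|}\;\le\;[\z_2,\nu_A]\qquad \h^{N-1}\text{-a.e. on }\partial^*A,
\]
so $[\z_1-\z_2,\nu_A]\le 0$ and the right-hand side above is $\le 0$. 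Thus both integrals vanish.

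From the first, $v_1f_1=v_2f_2$ a.e.\ on $A$ together with $v_1<v_2$ and $f_1\le f_2$ forces $f_1=f_2=0$ a.e.\ on $A$. Hence both $u_i$ solve the homogeneous equation $-\Div\z_i+|Du_i|=0$ on $A$ with identical interior trace on $\partial^*A$. From the vanishing boundary integral one further extracts $\z_1=\z_2$ and $Dv_1\parallel Dv_2$ a.e.\ on $\partial^*A$; invoking the uniqueness of the homogeneous Dirichlet problem in bounded subdomains (the mechanism underlying the triviality of the homogeneous case mentioned in the introduction), one concludes $u_1=u_2$ on $A$, contradicting $u_1>u_2$ there. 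Thus $|A|=0$ and $u_1\le u_2$. The main obstacle is the rigorous derivation of the sign $[\z_1-\z_2,\nu_A]\le 0$ on the reduced boundary $\partial^*A$ within the $L^\infty$-divergence-measure framework, which requires extending the weak trace $[\cdot,\nu]$ from $\partial\Omega$ to $\partial^*A$ and carefully exploiting the pairing machinery of Section~2 (especially Propositions \ref{Prop2.3}--\ref{Prop2.2}).
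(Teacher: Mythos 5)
Your substitution $v_i=e^{-u_i}$ and the identity $(\z_i,Dv_i)=-|Dv_i|$ are exactly the right ingredients (the paper also exploits $-\Div(e^{-u_i}\z_i)=e^{-u_i}f_i$, in its Step 3), but the central move of your argument --- a Green's formula with a weak normal trace $[V_1-V_2,\nu_A]$ on the reduced boundary $\partial^*A$ of the interior set $A=\{v_2>v_1\}$ --- is not available in the framework developed in the paper. Theorem \ref{Green} and the Anzellotti machinery only provide a weak trace $[\z,\nu]$ on $\partial\Omega$ (a Lipschitz boundary), not on arbitrary sets of finite perimeter inside $\Omega$, and the paper deliberately avoids such traces by always testing the equations against global test functions such as $(T_k(u_1)-T_k(u_2))^+$ and $(e^{-u_2}-e^{-u_1})^+$, then restricting the resulting measures. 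Moreover the displayed chain of inequalities
\[
[\z_1,\nu_A]\le\frac{|Dv_2|-|Dv_1|}{|D(v_1-v_2)|}\le[\z_2,\nu_A]
\]
does not have a precise meaning: $|Dv_1|,|Dv_2|,|D(v_1-v_2)|$ are mutually singularly interacting Radon measures, none of which is a function on $\partial^*A$, so the quotient is ill-defined $\h^{N-1}$-a.e.\ on $\partial^*A$. This is the genuine gap you flag yourself, and it does not reduce to a routine extension of the pairing.

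The final step is also not rigorous. Even granting $f_1=f_2=0$ a.e.\ on $A$, you cannot invoke ``uniqueness of the homogeneous Dirichlet problem in bounded subdomains'' because there is no self-contained Dirichlet problem posed on $A$: the equations hold on all of $\Omega$, $A$ is just a superlevel set, and the traces $v_i|_{\partial^*A}$ are not prescribed data but unknowns. The paper instead proves directly (Steps 6--7) that $Du_1=Du_2=0$ on $\{u_1>u_2\}$, by testing with $T_\e((T_k(u_1)-T_k(u_2))^+)$ and $(T_k(u_1)-T_k(u_2))^+$ and carefully using the already-established measure identities; only then, together with $D^j(u_1-u_2)=0$ and the zero boundary trace, does $(u_1-u_2)^+=0$ follow. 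Your proposal would need to be reworked along these lines to close both gaps.
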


\begin{proof}
For each $i=1,2$, we know that solution $u_i\in BV(\Omega)$ satisfies $D^ju_i=0$ and there exists a vector field $\z_i \in \DM(\Omega)$ such that $\|\z_i\|_\infty\le 1$. Moreover,
\begin{equation*}
-\Div \z_i +|Du_i| =f_i \,\text{ in }\,\dis (\Omega)\,,
\end{equation*}
\begin{equation*}
(\z_i, DT_k(u_i))=|DT_k(u_i)| \,\text{ as measures in }\, \Omega\;\; (\mbox{for every } k >0)\,,
\end{equation*}
and
\begin{equation*}
 u_i\big|_{\partial \Omega} = 0\,.
\end{equation*}

\medskip
We are seeking that $u_1\le u_2$, to this end we divide the proof in several steps.

\bigskip
\textbf{STEP 1: ${\bf (\z_1-\z_2,D(T_k(u_1)-T_k(u_2))^+)}$ is a positive Radon measure for all ${\bf k>0}$.}

Let $\varphi \in C_0^\infty(\Omega)$ with $\varphi \ge 0$. Then, the measure $(\z_1-\z_2,D(T_k(u_1)-T_k(u_2))^+))$ actually is
\begin{multline*}
\int_\Omega \varphi \,(\z_1-\z_2,D(T_k(u_1)-T_k(u_2))^+) = \int_{\{T_k(u_1)> T_k(u_2)\}} \varphi \,(\z_1-\z_2,D(T_k(u_1)-T_k(u_2)))
\\= \int_{\{T_k(u_1)> T_k(u_2)\}} \varphi\bigg[(z_1,DT_k(u_1)) - (z_2,DT_k(u_1)) - (z_1,DT_k(u_2)) + (z_2,DT_k(u_2)) \bigg]
\\=\int_{\{T_k(u_1)> T_k(u_2)\}} \varphi \bigg[ |DT_k(u_1)| - (z_2,DT_k(u_1)) -(z_1,DT_k(u_2)) + |DT_k(u_2)| \bigg]
\\ \ge 0 \,,
\end{multline*}
because $(z_i,Du_j)\le |Du_j|$ for $i,j=1,2$.

Therefore, we conclude that $(\z_1-\z_2,D(T_k(u_1)-T_k(u_2))^+)$ is a positive Radon measure.

\bigskip
\textbf{STEP 2: Prove that ${\bf \displaystyle \int_{\{u_1>u_2\}} (e^{-u_2}- e^{-u_1})(|Du_1| -|Du_2|)\ge 0}$.}

First, we take the test function $(e^{-u_2}-e^{-u_1})^+$ in problem \eqref{prob-u1} and since $(e^{-u_2}-e^{-u_1})^+\not=0$ if $u_1> u_2$, we get
\begin{multline}\label{ec-u1}
\int_{\{u_1>u_2\}} (\z_1,D(e^{-u_2}-e^{-u_1})) + \int_{\{u_1>u_2\}} (e^{-u_2}-e^{-u_1}) \,|Du_1|
\\ = \int_\Omega (e^{-u_2}-e^{-u_1})^+ f_1\,dx\,.
\end{multline}
Moreover, using that $e^{-u_2}-e^{-u_1} = (1-e^{-u_1})-(1-e^{-u_2})$ we also have
\begin{multline}\label{ec2-u1}
\int_\Omega (e^{-u_2}-e^{-u_1})^+ f_1\,dx =\int_{\{u_1>u_2\}}(\z_1,D(1-e^{-u_1}))- \int_{\{u_1>u_2\}}(\z_1,D(1-e^{-u_2}))
\\[2mm]+ \int_{\{u_1>u_2\}} e^{-u_2}\,|Du_1|- \int_{\{u_1>u_2\}} e^{-u_1} \,|Du_1|
\\[2mm]=\int_{\{u_1>u_2\}} |D(1-e^{-u_1})| - \int_{\{u_1>u_2\}}(\z_1,D(1-e^{-u_2}))
\\[2mm]+ \int_{\{u_1>u_2\}} e^{-u_2}\,|Du_1|- \int_{\{u_1>u_2\}} e^{-u_1} \,|Du_1|
\\[2mm] =-\int_{\{u_1>u_2\}}(\z_1,D(1-e^{-u_2})) + \int_{\{u_1>u_2\}} e^{-u_2}\,|Du_1|\,,
\end{multline}
where we have used Proposition \ref{Prop2.2} and the Chain Rule.

Now, taking the same test function $(e^{-u_2}-e^{-u_1})^+$ in problem \eqref{prob-u2} and making similar computations we obtain
\begin{equation}\label{ec2-u2}
\int_\Omega (e^{-u_2}-e^{-u_1})^+ f_2\,dx =\int_{\{u_1>u_2\}}(\z_2,D(1-e^{-u_1})) - \int_{\{u_1>u_2\}} e^{-u_1}\,|Du_2|\,.
\end{equation}
Since $f_1\le f_2$, we can join expressions \eqref{ec2-u1} and \eqref{ec2-u2} to get the following inequality:
\begin{multline*}
\int_{\{u_1>u_2\}} e^{-u_1}\,|Du_2|+ \int_{\{u_1>u_2\}} e^{-u_2}\,|Du_1|
\\ \le \int_{\{u_1>u_2\}}(\z_1,D(1-e^{-u_2}))  +\int_{\{u_1>u_2\}}(\z_2,D(1-e^{-u_1}))
\\ \le \int_{\{u_1>u_2\}} |(\z_1,D(1-e^{-u_2}))| +\int_{\{u_1>u_2\}} |(\z_2,D(1-e^{-u_1}))|
\\ \le \int_{\{u_1>u_2\}} |D(1-e^{-u_2})| + \int_{\{u_1>u_2\}} |D(1-e^{-u_1})|
\\= \int_{\{u_1>u_2\}} e^{-u_2}|Du_2| + \int_{\{u_1>u_2\}} e^{-u_1}|Du_1|\,,
\end{multline*}
where we have used that $\|\z_i\|\le 1$ for $i=1,2$ and the Chain Rule.

In conclusion, we have just proved
\begin{equation*}
 \int_{\{u_1>u_2\}} e^{-u_2}|Du_2| + \int_{\{u_1>u_2\}} e^{-u_1}|Du_1| -\int_{\{u_1>u_2\}} e^{-u_1}\,|Du_2|-\int_{\{u_1>u_2\}} e^{-u_2}\,|Du_1| \le 0\,,
\end{equation*}
and we are done.

\bigskip
\textbf{STEP 3: The Radon measure ${\bf (\z_1-\z_2,D(T_k(u_1)-T_k(u_2))^+)}$ vanishes for all ${\bf k>0}$.}

Since $u_1$ is a solution to problem \eqref{prob-u1} and $u_2$ is a solution to problem \eqref{prob-u2}, the following equalities hold in $\dis(\Omega)$ (see Proposition \ref{prop-1}):
\begin{equation}\label{dis-u1}
-\Div (e^{-u_1}\z_1)=e^{-u_1}f_1
\end{equation}
and
\begin{equation}\label{dis-u2}
-\Div (e^{-u_2}\z_2)=e^{-u_2}f_2\,.
\end{equation}

Firstly, let $k>0$ we choose the test function $(T_k(u_1)-T_k(u_2))^+$ in equality \eqref{dis-u1}. Applying Green's formula  we get
\begin{equation}\label{dis2-u1}
\int_\Omega (e^{-u_1}\z_1,D(T_k(u_1)-T_k(u_2))^+) = \int_\Omega (T_k(u_1)-T_k(u_2))^+ e^{-u_1}f_1\,dx \,,
\end{equation}
and using the same test function but now in equality \eqref{dis-u2} we have
\begin{equation}\label{dis2-u2}
\int_\Omega (e^{-u_2}\z_2,D(T_k(u_1)-T_k(u_2))^+) = \int_\Omega (T_k(u_1)-T_k(u_2))^+ e^{-u_2}f_2\,dx \,.
\end{equation}
Now, we put together \eqref{dis2-u1} and \eqref{dis2-u2} to obtain
\begin{multline}\label{ec-2}
\int_\Omega (T_k(u_1)-T_k(u_2))^+ (e^{-u_1}f_1-e^{-u_2}f_2)\,dx\\
=\int_\Omega (e^{-u_1}\z_1 - e^{-u_2}\z_2,D(T_k(u_1)-T_k(u_2))^+)
\\=\int_{\{ T_k(u_1)>T_k(u_2) \}} (e^{-u_2}\z_2 - e^{-u_1}\z_1,D(T_k(u_2)-T_k(u_1)))\,.
\end{multline}
Observe that the integral on the left hand side is non--positive since $e^{-u_1}f_1-e^{-u_2}f_2\le0$ where
$T_k(u_1)-T_k(u_2)>0$.
Our aim is to prove the following limit:
\begin{equation}\label{limite}
\lim_{k \to \infty}  \int_\Omega (e^{-u_1}\z_1 - e^{-u_2}\z_2,D(T_k(u_1)-T_k(u_2))^+) =0 \,,
\end{equation}
which is non--positive because of \eqref{ec-2}. To this end, we write
 \begin{multline*}
\int_{\{ T_k(u_1)>T_k(u_2) \}} (e^{-u_2}\z_2 - e^{-u_1}\z_1,D(T_k(u_2)-T_k(u_1)))
\\= \underbrace{\int_{\{ T_k(u_1)>T_k(u_2) \}} ((e^{-u_2}-e^{-u_1})\,\z_2,D(T_k(u_2)-T_k(u_1)))}_{\mbox{(I.1)}}
\\ + \underbrace{\int_{\{ T_k(u_1)>T_k(u_2) \}} (e^{-u_1}(\z_2-\z_1),D(T_k(u_2)-T_k(u_1)))}_{\mbox{(I.2)}} \,,
\end{multline*}
and will see that the limits as $k$ goes to $\infty$ of (I.1) and of (I.2) are non--negative and so \eqref{limite} holds.

On the one hand, we know that
\begin{multline*}
\int_{\{ T_k(u_1)>T_k(u_2) \}} ((e^{-u_2}-e^{-u_1})\,\z_2,D(T_k(u_2)-T_k(u_1)))
\\ \ge  \int_{\{ T_k(u_1)>T_k(u_2) \}} (e^{-u_2}-e^{-u_1})\car_{\{u_2<k\}}|Du_2| - \int_{\{ T_k(u_1)>T_k(u_2) \}} (e^{-u_2}-e^{-u_1})\car_{\{u_1<k\}}|Du_1| \,,
\end{multline*}
and when we take limits when $k$ goes to $\infty$, we get
\begin{multline*}
\lim_{k \to \infty} \int_{\{ T_k(u_1)>T_k(u_2) \}} ((e^{-u_2}-e^{-u_1})\,\z_2,D(T_k(u_2)-T_k(u_1)))
\\=\int_{\{u_1>u_2\}}  (e^{-u_2}-e^{-u_1}) (|Du_2|-|Du_1|) \ge 0 \,,
\end{multline*}
which is non--negative due to Step 2.

On the other hand, we already know that integral (I.2) is non--negative (because of Step 1), therefore the limit when $k \to \infty$ is non--negative too.

In short, we have proved
\begin{multline*}
\lim_{k \to \infty}\int_{\{ T_k(u_1)>T_k(u_2) \}} (T_k(u_1)-T_k(u_2)) (e^{-u_1}f_1-e^{-u_2}f_2)\,dx
\\ = \lim_{k \to \infty}\int_\Omega (e^{-u_1}\z_1 - e^{-u_2}\z_2,D(T_k(u_1)-T_k(u_2))^+) =0 \,.
\end{multline*}
Furthermore, since \eqref{ec-2}=(I.1)+(I.2) and the limits of integral (I.1) and (I.2) are both non--negative, it follows that both limits vanish.

Now, some remarks on Radon--Nikod\'ym derivatives of these measures are in order.
Let $\theta_k^1(\z_2,DT_k(u_1),x)$ be the Radon--Nikod\'ym derivative of $(\z_2,DT_k(u_1))$ with respect to $|DT_k(u_1)|$:
\begin{equation*}
\theta_k^1(\z_2,DT_k(u_1),x)\,|DT_k(u_1)|=(\z_2,DT_k(u_1)) \,.
\end{equation*}
Since $|(\z_2,DT_k(u_1))| \le |DT_k(u_1)|$, it follows that $|\theta_k^1(\z_2,DT_k(u_1),x)| \le 1$.
We point out that this function is $|DT_k(u_1)|$--measurable and, taking $\theta^1_k(\z_2,DT_k(u_1),x)=0$ in $\{ u_1\ge k\}$, it is $|Du_1|$--measurable.

On the other hand, it holds that $(\z_2,DT_{k+1}(u_1))\res{\{ u_1< k \}}=(\z_2,DT_k(u_1))$. Therefore
\begin{equation*}
\theta^1_{k+1}(\z_2,DT_{k+1}(u_1),x)\car_{\{ u_1<k \}}(x)=\theta^1_k(\z_2,DT_k(u_1),x) \,,
\end{equation*}
and $\theta_k^1(\z_2,DT_k(u_1),x)$ defines a non--decreasing sequence of $|Du_1|$--measurable functions.

Likewise, if we denote by $\theta_k^2(\z_1,DT_k(u_2),x)$ the Radon--Nikod\'ym derivative of $(\z_1,DT_k(u_2))$ with respect to $|DT_k(u_2)|$, then we may deduce the inequality $|\theta_k^2(\z_1,DT_k(u_2),x)| \le 1$. Moreover, $\theta_k^2(\z_1,DT_k(u_2),x)$ defines a non--decreasing sequence of $|Du_2|$--measurable functions.

Now, we define the functions $\theta^1(x)$ and $\theta^2(x)$ such that
\begin{equation*}
\theta^1(x)=\theta_k^1(\z_2,DT_k(u_1),x) \quad\mbox{ if }\quad u_1(x)<k \,,
\end{equation*}
and
\begin{equation*}
\theta^2(x)=\theta_k^2(\z_1,DT_k(u_2),x) \quad \mbox{ if }\quad u_2(x)<k \,.
\end{equation*}
We know that $\theta^1$ and $\theta^2$ are $|Du_1|$ and $|Du_2|$--measurable, respectively, and satisfy $|\theta^1| \le 1$ and $|\theta^2| \le 1$.

So let us get back to expression (I.2). We know that
\begin{multline*}
\int_\Omega (e^{-u_1}(\z_1-\z_2),D(T_k(u_1)-T_k(u_2))^+)
\\=\int_{\{ T_k(u_1)>T_k(u_2) \}} e^{-u_1}\bigg[ (\z_1,DT_k(u_1))-(\z_2,DT_k(u_1))-(\z_1,DT_k(u_2))+(\z_2,DT_k(u_2)) \bigg]
\\=\int_\Omega  e^{-u_1}\car_{\{ T_k(u_1)>T_k(u_2) \}\cap \{ u_1<k\}} (1-\theta^1(x))|Du_1|
\\+ \int_\Omega  e^{-u_1}\car_{\{ T_k(u_1)>T_k(u_2)\}\cap \{ u_2<k\}} (1-\theta^2(x))|Du_2| \,,
\end{multline*}
and using the Convergence Dominated Theorem we can take limit when $k \to \infty$ to arrive at
\begin{equation*}
0=\int_{\{ u_1>u_2 \}} e^{-u_1} (1-\theta^1(x))|Du_1|
+\int_{\{ u_1>u_2 \}} e^{-u_1} (1-\theta^2(x))|Du_2|  \,.
\end{equation*}
Since both integrals are non--negative, it yields
\begin{equation*}
0 =\int_{\{ u_1>u_2 \}} e^{-u_1} (1-\theta^1(x))|Du_1|=\int_{\{ u_1>u_2 \}} e^{-u_1} (1-\theta^2(x))|Du_2| \,.
\end{equation*}
Therefore, we deduce that $1-\theta^i(x)=0$ $|Du_i|$--a.e. in $\{ u_1>u_2 \}$ for $i=1,2$ and then, the Radon--Nikod\'ym derivative is $\theta_k^i=1$ $|Du_i|$--a.e. in $\{ u_1>u_2 \} \cap \{u_i<k\}$ with $i=1,2$ for every $k>0$. That is, we have the following equalities as measures:
\begin{equation}\label{med-1}
|DT_k(u_1)|\res{\{ u_1>u_2 \}}=(\z_2,DT_k(u_1))\res{\{ u_1>u_2 \}} \,,
\end{equation}
and
\begin{equation}\label{med-2}
|DT_k(u_2)|\res{\{ u_1>u_2 \}}=(\z_1,DT_k(u_2)) \res{\{ u_1>u_2 \}} \,.
\end{equation}
Finally, noting that $\{T_k(u_1)>T_k(u_2)\}\subseteq \{u_1>u_2\}$ and the measure $(\z_1-\z_2,D(T_k(u_1)-T_k(u_2))^+) $ is non--negative:
\begin{multline*}
(\z_1-\z_2,D(T_k(u_1)-T_k(u_2))^+)
\\=\bigg[|DT_k(u_1)|-(\z_2,DT_k(u_1))-(\z_1,DT_k(u_2))+|DT_k(u_2)|\bigg]\res{\{T_k(u_1)>T_k(u_2)\}}
\\ \le \bigg[|DT_k(u_1)|-(\z_2,DT_k(u_1))-(\z_1,DT_k(u_2))+|DT_k(u_2)|\bigg]\res{\{u_1>u_2\}}
\\=0 \,.
\end{multline*}

\bigskip
\textbf{STEP 4: ${\bf (\z_i,DT_k(u_j))\res{\{T_k(u_1)>T_k(u_2)\}}=|DT_k(u_j)|\res{\{T_k(u_1)>T_k(u_2)\}}}$ as measures for ${\bf i,j=1,2}$ and ${\bf k>0}$.}

Since $\{T_k(u_1)>T_k(u_2)\}\subseteq \{u_1>u_2\}$ and we have proved equalities \eqref{med-1} and \eqref{med-2}, Step 4 is straightforward.

\bigskip
\textbf{STEP 5: If ${\bf u_1>u_2}$, then ${\bf f_1=f_2 =0}$.}

In Step 3 we have proved that the limit of expression \eqref{ec-2} when $k$ goes to $\infty$ is 0. Then, using the Monotone Convergence Theorem, we get
\begin{equation*}
0=\int_\Omega (u_1-u_2)^+(e^{-u_1}f_1-e^{-u_2}f_2) \,dx \,.
\end{equation*}
Notice that if $u_1>u_2$, then $e^{-u_1}f_1= e^{-u_2}f_2$ and $f_1 =e^{-(u_2-u_1)} f_2 > f_2$ when $f_2\ne0$. We conclude that $u_1>u_2$ implies $f_2=f_1=0$.

\bigskip
\textbf{STEP 6: Prove that ${\bf \displaystyle \int_{\{ u_1>u_2 \}} |Du_1| = \int_{\{ u_1>u_2 \}} |Du_2|}$.}

Firstly, we take $T_{\e}((T_k(u_1)-T_k(u_2))^+)$ as a test function in problems \eqref{prob-u1} and \eqref{prob-u2} and using the previous step we get the following equalities:
\begin{multline}\label{ec3-u1}
0 = \int_{\{ T_k(u_1)>T_k(u_2) \}} (\z_1,DT_{\e}(T_k(u_1)-T_k(u_2))) \\+ \int_{\{ T_k(u_1)>T_k(u_2) \}} T_{\e}(T_k(u_1)-T_k(u_2)) \,|Du_1| \,,
\end{multline}
and
\begin{multline}\label{ec3-u2}
0 = \int_{\{ T_k(u_1)>T_k(u_2) \}} (\z_2,DT_{\e}(T_k(u_1)-T_k(u_2))) \\+ \int_{\{ T_k(u_1)>T_k(u_2) \}} T_{\e}(T_k(u_1)-T_k(u_2)) \,|Du_2| \,.
\end{multline}
Now, we use Step 3 to have:
\begin{equation*}
(\z_1-\z_2, D(T_k(u_1)-T_k(u_2)))\res{\{ T_k(u_1)>T_k(u_2) \}} = 0 \,.
\end{equation*}
Furthermore, when we take the restriction to the set $\{ 0 <T_k(u_1)-T_k(u_2)<\e \}$ for all $\e >0$, we also get that the measure vanishes.
Due to this fact, when we consider together equations \eqref{ec3-u1} and \eqref{ec3-u2}, we obtain
\begin{multline*}
\int_{\{ T_k(u_1)>T_k(u_2) \}} T_{\e}(T_k(u_1)-T_k(u_2)) \,|Du_1|\\=\int_{\{ T_k(u_1)>T_k(u_2) \}} T_{\e}(T_k(u_1)-T_k(u_2)) \,|Du_2| \,.
\end{multline*}
Now, dividing both integrals by $\e$ and using the Dominated Convergence Theorem we can take limits as $\e$ goes to 0 and then we get
\begin{equation*}
\int_{\{ T_k(u_1)>T_k(u_2) \}} |Du_1|=\int_{\{ T_k(u_1)>T_k(u_2) \}} |Du_2| \,.
\end{equation*}
Finally, the Dominated Convergence Theorem also allows us to take limits as $k \to \infty$ and so
we arrive at
\begin{equation}\label{ec-3}
\int_{\{ u_1>u_2 \}} |Du_1|=\int_{\{ u_1>u_2 \}} |Du_2| \,.
\end{equation}

\bigskip
\textbf{STEP 7: Prove ${\bf Du_1 =Du_2 =0}$ in ${\bf \{ u_1 > u_2 \}}$.}

We begin taking the test function $(T_k(u_1)-T_k(u_2))^+$ in problem \eqref{prob-u1} and having in mind Step 4 and Step 5, we get:
\begin{multline}\label{ec-4}
0= \int_{\{ T_k(u_1) > T_k(u_2) \}} |DT_k(u_1)|-\int_{\{ T_k(u_1) > T_k(u_2) \}} |DT_k(u_2)|
\\+ \int_{\{ T_k(u_1) > T_k(u_2) \}} (T_k(u_1)-T_k(u_2))\,|Du_1| \,.
\end{multline}

Now, the first two integrals are convergent as $k \to \infty$ due to the Dominated Convergence Theorem and the last one converges by the Monotone Convergence Theorem. Hence, when $k$ goes to $\infty$ in \eqref{ec-4} we get
\begin{equation*}
0= \int_{\{ u_1>u_2 \}} (|Du_1|-|Du_2|) + \int_{\{ u_1>u_2 \}} (u_1-u_2)\,|Du_1| \,,
\end{equation*}
and since the first integral is finite, the last one is finite too.

On the other hand, we have proved in Step 6 that the first integral vanishes, then the above equality becomes
\begin{equation*}
0  = \int_{\{ u_1 > u_2 \}} (u_1-u_2)\,|Du_1| \,,
\end{equation*}
and we deduce that $|Du_1|\res{\{ u_1 > u_2 \}} =0$ and also $Du_1 =0$ in $\{ u_1 > u_2 \}$.

To prove that $Du_2 =0$ in $\{ u_1 > u_2 \}$ we use \eqref{ec-3} and since we already know that $Du_1 =0$ in $\{ u_1 > u_2 \}$, it becomes
\begin{equation*}
0=\int_{\{ u_1 > u_2 \}} |Du_2| \,.
\end{equation*}
Therefore we have that $Du_2 =0$ in $\{ u_1 > u_2 \}$.

\bigskip
\textbf{STEP 8:  ${\bf u_1\le u_2}$ in ${\bf \Omega}$.}

We have seen that $D(u_1-u_2) =0$ in $\{ u_1 > u_2 \}$ and since $D^j (u_1-u_2) =0$, it holds that $D(u_1-u_2)^+ =0$ in $\Omega$. Moreover, we know that $(u_1-u_2)^+=0$ in $\partial \Omega$, therefore we get that $0=(u_1-u_2)^+$ in $\Omega$.
\end{proof}

\begin{Corollary}
Let $\Omega$ be a bounded open subset of $\R^N$  with Lipschitz boundary. Let $f$ be a non--negative function in $L^1(\Omega)$. Then, problem
\begin{equation*}
\left\{\begin{array}{ll}
\displaystyle -\Div\left(\frac{Du}{|Du|}\right)+|Du|=f(x)&\hbox{ in }\,\Omega\,,\\[3mm]
u=0 &\hbox{ on }\,\partial\Omega\,,
\end{array}\right.
\end{equation*}
has a unique solution $u \in BV(\Omega)$.
\end{Corollary}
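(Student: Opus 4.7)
The plan is to obtain both existence and uniqueness as direct consequences of the two main results already established in this section, namely the Existence Theorem and the Comparison Principle. Existence is immediate: the Existence Theorem, applied to the given non--negative datum $f\in L^1(\Omega)$, furnishes a solution $u\in BV(\Omega)$ in the sense of Definition \ref{def-sol}.

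For uniqueness, I would proceed by taking two arbitrary solutions $u_1, u_2 \in BV(\Omega)$ corresponding to the same datum $f$, and apply the Comparison Principle twice. Setting $f_1=f_2=f$, the hypothesis $f_1\le f_2$ is trivially satisfied, so the theorem gives $u_1\le u_2$ a.e.\ in $\Omega$. Exchanging the roles of $u_1$ and $u_2$ (the hypothesis $f_2\le f_1$ is equally satisfied when $f_1=f_2$), the same theorem yields $u_2\le u_1$ a.e.\ in $\Omega$. Combining both inequalities gives $u_1=u_2$ a.e.\ in $\Omega$, which is the desired uniqueness.

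Since all the heavy lifting has already been carried out in Theorem 3.2 and in the Comparison Principle above, there is no genuine obstacle in this corollary; its content is really a bookkeeping observation that the Comparison Principle, stated for ordered data, applies in particular to equal data and therefore forces coincidence of solutions. The only point worth making explicit is that the class of solutions considered in Definition \ref{def-sol} is precisely the class for which the Comparison Principle was proved, so no additional regularity or compatibility needs to be verified.
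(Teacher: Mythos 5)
Your proof is correct and is precisely the argument the paper intends; the corollary is stated without an explicit proof because, as you say, existence follows from the Existence Theorem and uniqueness from applying the Comparison Principle twice with $f_1=f_2=f$.
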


\section{Better summability}

In Section 3 we have seen that problem \eqref{prob-prin} has a solution for every non--negative datum of $L^1(\Omega)$, and this solution belongs to $BV(\Omega)\subset L^{\frac N{N-1}}(\Omega)$. We expect that the solution satisfies a better summability if the datum belongs to $L^p(\Omega)$, $p>1$. In this regard we recall that, when data $f$ are in the space $L^p(\Omega)$ with $p>N$, it is proved in \cite{MS} that the solution is always bounded. For datum $f \in L^N(\Omega)$, we proved in \cite{LS} that the solution belongs to $L^q(\Omega)$ with $1< q < \infty$.

In this section, we are showing that solutions belong to $L^{\frac{Np}{N-p}}(\Omega)$ if data are in $L^p(\Omega)$ with $1< p<N$. Observe that this result adjust continuously for $p=1$ and $p=N$ with the known facts.

\bigskip

\begin{center}
\begin{tabular}{|c|c|}
\hline
&\\
Data & Solution\\
&\\
\hline
\hline
&\\
$ f\in L^p(\Omega) $ with $p>N$ & $u\in L^\infty(\Omega)$\\
&\\
\hline
&\\
$f\in L^{N}(\Omega)$ & $u\in L^q(\Omega)$ with $1\le q < \infty$\\
&\\
\hline
&\\
$f\in L^p(\Omega)$ with $1<p<N$ & $u\in L^{\frac{Np}{N-p}}(\Omega)$\\
&\\
\hline
&\\
$f\in L^1(\Omega)$  & $u\in L^{\frac{N}{N-1}}(\Omega)$ \\
&\\
\hline
\end{tabular}
\end{center}

\bigskip

The proof of our theorem relies on certain preliminary results. The first one enable us to take a power of our solution $u^q$ as a test function in problem \eqref{prob-prin}.

\begin{Proposition}\label{proposition-1}
If $u\in BV(\Omega)$ is a solution to problem \eqref{prob-prin} satisfying $u^q \in L^{p'}(\Omega)$ for certain $q>1$, then $u^q$ and $u^{q+1} \in BV(\Omega)$. Moreover,
\begin{equation}\label{ec-prop}
\int_\Omega |Du^q|+\int_\Omega u^q\,|Du|=\int_\Omega u^q\,f \,.
\end{equation}
\end{Proposition}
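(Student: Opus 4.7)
My plan is to use $T_k(u)^q$ as test function in the equation, derive the truncated version of the identity, and let $k\to\infty$. By the comparison principle (with $f_1\equiv 0$, $f_2=f$), $u\ge 0$ in $\Omega$, so in particular $T_k(u)^q=T_{k^q}(u^q)$. The function $\varphi_k:=T_k(u)^q$ lies in $BV(\Omega)\cap L^\infty(\Omega)$ (the composition $t\mapsto T_k(t)^q$ is Lipschitz), has zero trace on $\partial\Omega$, and satisfies $D^j\varphi_k=0$ by Proposition \ref{Chain-Rule}; it is therefore admissible in the extension of Anzellotti's Green formula to $BV\cap L^\infty$. Testing $-\Div\z+|Du|=f$ against $\varphi_k$ and using Green's formula (boundary term vanishing) yields
\[
\int_\Omega(\z,D\varphi_k)+\int_\Omega\varphi_k\,|Du|=\int_\Omega\varphi_k\,f\,.
\]

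\textbf{Identification of the pairing.} Since $u$ is a solution, $(\z,DT_k(u))=|DT_k(u)|$, i.e.\ $\theta(\z,DT_k(u),\cdot)=1$. As $t\mapsto t^q$ is Lipschitz and non-decreasing on the range of $T_k(u)$, Proposition \ref{Prop2.2} gives $\theta(\z,D\varphi_k,\cdot)=\theta(\z,DT_k(u),\cdot)=1$, whence $(\z,D\varphi_k)=|D\varphi_k|=|DT_k(u)^q|$. The displayed identity becomes
\begin{equation*}
\int_\Omega|DT_k(u)^q|+\int_\Omega T_k(u)^q\,|Du|=\int_\Omega T_k(u)^q\,f\le \|u^q\|_{p'}\|f\|_{p}\,,
\end{equation*}
so each of the three integrals is bounded uniformly in $k$.

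\textbf{Passing to the limit.} Since $T_{k^q}(u^q)\to u^q$ in $L^1(\Omega)$ (monotone convergence, using $u^q\in L^{p'}\subset L^1$), the uniform bound on $\int_\Omega|DT_{k^q}(u^q)|$ together with the lower semicontinuity of the total variation gives $u^q\in BV(\Omega)$. For $u^{q+1}$, the Chain Rule applied to $T_k(u)^{q+1}$ yields $|DT_k(u)^{q+1}|=(q+1)T_k(u)^q|DT_k(u)|\le(q+1)T_k(u)^q|Du|$, so $\int_\Omega|DT_k(u)^{q+1}|$ is uniformly bounded; the Poincar\'e inequality for zero-trace $BV$ functions (through the embedding $BV\hookrightarrow L^{N/(N-1)}$) furnishes a uniform $L^1$ bound, and lower semicontinuity delivers $u^{q+1}\in BV(\Omega)$. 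Finally, in the truncated identity, the right-hand side converges to $\int_\Omega u^q\,f$ by dominated convergence, the middle term to $\int_\Omega u^q|Du|$ by monotone convergence, and the first term to $\int_\Omega|Du^q|$ by the classical $BV$--truncation fact $|DT_m(v)|(\Omega)\to|Dv|(\Omega)$ for non-negative $v\in BV$ (applied with $v=u^q$, $m=k^q$). The most delicate point is this last convergence, which however is routine once $u^q\in BV(\Omega)$ has been secured through the uniform total-variation estimate above.
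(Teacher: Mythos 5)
Your proof is correct and follows essentially the same strategy as the paper: test the equation against a bounded truncation of $u^q$, identify the pairing $(\z,D\,\cdot\,)$ with the total variation using Proposition \ref{Prop2.2}, obtain uniform bounds from H\"older, and pass to the limit. The one genuine difference is that the paper tests with $G_\delta(T_k(u)^q)$ (where $G_\delta(s)=s-T_\delta(s)$) and then sends $\delta\to 0^+$ before $k\to\infty$, whereas you test directly with $T_k(u)^q$. Your shortcut is legitimate: $T_k(u)^q$ already lies in $BV(\Omega)\cap L^\infty(\Omega)$, has vanishing jump part, and has zero trace on $\partial\Omega$ (so the boundary term in Green's formula drops out without any help from $G_\delta$), and the identification $(\z,DT_k(u)^q)=|DT_k(u)^q|$ only needs $t\mapsto T_k(t^+)^q$ to be Lipschitz and non-decreasing, which it is. Eliminating the inner $\delta$-limit also lets you invoke lower semicontinuity of total variation directly, instead of the paper's somewhat informal manipulation of ``$\car_{\{u<k\}\cap\{u^q>\delta\}}|Du^q|$'' before $u^q\in BV(\Omega)$ is established. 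The remaining steps (the bound $|DT_k(u)^{q+1}|\le(q+1)T_k(u)^q|Du|$, the Poincar\'e/Sobolev embedding to get $u^{q+1}\in L^1$, and the final limit passage using dominated/monotone convergence and $|DT_m(v)|(\Omega)\to|Dv|(\Omega)$) match the paper's in substance. In short: same route, cleanly streamlined by one superfluous truncation.
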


\begin{proof}

Fixed $k>0$, we define the function $G_k(s)= s-T_k(s)$, and we take the test function $G_\delta(T_k(u)^q)$ with $\delta, k>0$ in problem \eqref{prob-prin} obtaining the following equality:
\begin{equation*}
\int_\Omega (\z,DG_\delta(T_k(u)^q)) + \int_\Omega G_\delta(T_k(u)^q)\,|Du| = \int_\Omega G_\delta(T_k(u)^q)f\,dx \,.
\end{equation*}
Since we know that the Radon--Nikod\'ym derivative of $(\z,DG_\delta(T_k(u)^q))$ and $(\z,DT_k(u))$ with respect their respective total variations are the same (Proposition \ref{Prop2.2})
and $(\z,DT_k(u))=|DT_k(u)|$ holds for all $k>0$, we deduce that
\begin{equation*}
(\z,G_\delta(T_k(u)^q))=|DG_\delta((T_k(u))^q)|\,.
\end{equation*}
Then we can write:
\begin{equation}\label{lim-ademas}
\int_\Omega |DG_\delta(T_k(u)^q)| + \int_\Omega G_\delta(T_k(u)^q)\,|Du| = \int_\Omega G_\delta(T_k(u)^q)f\,dx \,.
\end{equation}
Now, we use the inequality $G_\delta(T_k(u)^q) \le u^q$ and H\"older's inequality to get the following bound:
\begin{equation*}
\int_\Omega G_\delta(T_k(u)^q)f\,dx \le \int_\Omega u^q f\,dx \le \|u^q\|_{p'}\|f\|_p < \infty \,.
\end{equation*}
Therefore, each integral in left hand side of \eqref{lim-ademas} is also bounded:
\begin{equation}\label{BV-q}
\int_\Omega |DG_\delta(T_k(u)^q)| \le \|u^q\|_{p'}\|f\|_p < \infty \,,
\end{equation}
and
\begin{equation}\label{BV-q+1}
\int_\Omega G_\delta(T_k(u)^q)\,|Du| \le \|u^q\|_{p'}\|f\|_p < \infty \,.
\end{equation}
We will take advantage of these bounds to take limits in \eqref{lim-ademas}.

Now, we are able to prove $u^q \in BV(\Omega)$.
Using the Chain Rule in \eqref{BV-q} we can write the following inequalities
\begin{equation*}
\int_\Omega \car_{\{u<k\}\cap\{u^q > \delta\}}|Du^q| =\int_\Omega |DG_\delta(T_k(u)^q)| \le \|u^q\|_{p'}\|f\|_p < \infty \,,
\end{equation*}
and, using Monotone Convergence Theorem, we let $\delta \to 0^+$ to get
\begin{equation*}
\int_\Omega \car_{\{u<k\}}|Du^q| \le \|u^q\|_{p'}\|f\|_p < \infty \,.
\end{equation*}
Lastly, we let $k$ goes to $\infty$ and appealing to the Monotone Convergence Theorem once more, it works out that $u^q$ is a bounded variation function.

Let $0<\delta<1$ and keeping in mind \eqref{BV-q+1}, we get the following bound
\begin{multline*}
\int_\Omega \car_{\{u<k\}\cap\{u^{q+1} > \delta\}}|Du^{q+1}|
= (q+1)\int_\Omega u^q \car_{ \{u<k\} \cap\{u^{q+1}>\delta\}} |Du|
\\ \le (q+1)\int_\Omega (G_\delta(T_k(u)^q) + \delta)\,|Du|
\\ \le (q+1)(\|u^q\|_{p'}\|f\|_p +\delta\|u\|_{BV})< \infty \,.
\end{multline*}
Taking limits when $\delta \to 0^+$ and also when $k \to \infty$ we get
\begin{equation}\label{exp-1}
\int_\Omega |Du^{q+1}| \le (q+1)\|u^q\|_{p'}\|f\|_p < \infty \,,
\end{equation}
that is, $u^{q+1}\in BV(\Omega)$.

To conclude, we take limits in \eqref{lim-ademas} firstly when $\delta \to 0^+$ and secondly when $k \to \infty$ and then we obtain
\begin{equation*}
\int_\Omega |Du^q| + \int_\Omega u^q |Du| = \int_\Omega u^q \, f \,.
\end{equation*}
\end{proof}

\begin{Theorem}\label{teo-induccion}
Let $1<p<N$ and let $f \in L^p(\Omega)$ be a non--negative function. Then the solution to problem \eqref{prob-prin} belongs to $BV(\Omega)\cap L^{s}(\Omega)$ for every $1\le s<\frac{Np}{N-p}$.
\end{Theorem}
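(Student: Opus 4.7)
The approach is an iterative bootstrap built on Proposition~\ref{proposition-1}. Starting from the identity \eqref{ec-prop} and bounding the right-hand side by H\"older, we have
\begin{equation*}
\int_\Omega |Du^{q+1}| = (q+1)\int_\Omega u^q |Du| \le (q+1)\int_\Omega u^q f\,dx \le (q+1)\|u\|_{qp'}^q\|f\|_p,
\end{equation*}
and since $u$, and hence $u^{q+1}$, has zero trace on $\partial\Omega$, the Sobolev inequality for $BV$--functions of vanishing trace upgrades this to
\begin{equation*}
\|u\|_{(q+1)N/(N-1)}^{q+1} = \|u^{q+1}\|_{N/(N-1)} \le C(q+1)\|u\|_{qp'}^q\|f\|_p.
\end{equation*}
In words, $u\in L^{qp'}(\Omega)$ propagates to $u\in L^{(q+1)N/(N-1)}(\Omega)$ with a quantitative bound.

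Next I would set up the bootstrap. Define $s_0=N/(N-1)$ (granted by $u\in BV(\Omega)$) and, for $n\ge 0$,
\begin{equation*}
q_{n+1}=\frac{s_n}{p'},\qquad s_{n+1}=\Bigl(\tfrac{s_n}{p'}+1\Bigr)\frac{N}{N-1},
\end{equation*}
so that $q_{n+1}p'=s_n$. The map $s\mapsto (s/p'+1)N/(N-1)$ is affine with slope $N/[(N-1)p']$, which is strictly less than one precisely when $p<N$, and its unique fixed point, found by direct computation, is $s^\star=Np/(N-p)$. Consequently $(s_n)$ is strictly increasing with $s_n\uparrow s^\star$.

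For the induction step, assuming $u\in L^{s_n}(\Omega)$ gives $u^{q_{n+1}}\in L^{p'}(\Omega)$, so Proposition~\ref{proposition-1} produces $u^{q_{n+1}+1}\in BV(\Omega)$ and the displayed inequality promotes this to $u\in L^{s_{n+1}}(\Omega)$. The base case $n=0$ is the Sobolev--$BV$ embedding. Iterating, for every $1\le s<Np/(N-p)$ there is an index $n$ with $s<s_n$, hence $u\in L^s(\Omega)$.

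The principal obstacle is that the initial exponents $q_{n+1}$ may not exceed $1$, in which case Proposition~\ref{proposition-1} as stated does not directly apply; indeed, for $1<p<N$ one has $q_1=N/[(N-1)p']<1$. The resolution is that the proof of that proposition carries over verbatim to any $q>0$: the test function $G_\delta(T_k(u)^q)=(T_k(u)^q-\delta)^+$ vanishes on $\{u\le \delta^{1/q}\}$, so it is bounded and Lipschitz in $u$ regardless of the value of $q>0$, the non-smoothness of $s\mapsto s^q$ at the origin being cut off; and only the conclusion $u^{q+1}\in BV(\Omega)$ (with the regular exponent $q+1>1$) is actually used above. With this extension in hand, the iteration runs for every $n$ and the theorem follows.
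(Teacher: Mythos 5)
Your bootstrap is exactly the paper's: the recursion $s_{n+1}=(s_n/p'+1)N'$ with $s_0=N'$ coincides with the paper's sequence $s_j=N'\sum_{k=0}^{j}(N'/p')^k$, and both increase to the same fixed point $Np/(N-p)$ because $N'/p'<1$ when $p<N$. What is worth noting is the subtlety you flag about the hypothesis $q>1$ in Proposition~\ref{proposition-1}. The paper applies that proposition already in its base step with $q=N'/p'=N(p-1)/((N-1)p)$, which is strictly less than $1$ for every $1<p<N$, so the application is formally outside the stated hypothesis; the paper does not remark on this. Your observation — that the proposition's proof goes through verbatim for any $q>0$, since $G_\delta(T_k(u)^q)=(T_k(u)^q-\delta)^+$ vanishes on $\{u\le\delta^{1/q}\}$ and is therefore a bounded Lipschitz function of $u$ whatever the value of $q>0$, and only the conclusion on $u^{q+1}$ (exponent $q+1>1$) is used downstream — is precisely the right repair. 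So your proof is the same argument as the paper's, but it closes a gap the paper leaves silent.
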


\begin{proof}
Let $u \in BV(\Omega)$ denote the unique solution to problem \eqref{prob-prin}. For every $j\in \N$, we will prove that $u \in L^{s_j}(\Omega)$ where
\begin{equation*}
s_j= N'\sum\limits_{k=0}^j \left(\dfrac{N'}{p'}\right)^k \,.
\end{equation*}
It should be noted that $\lim\limits_{j\to\infty}s_j=N'\sum\limits_{k=0}^\infty \left(\dfrac{N'}{p'}\right)^k = \dfrac{Np}{N-p}$. Thus, proving $u \in L^{s_j}(\Omega)$ for all $j\in\N$, we are done.

Firstly, we choose $q=\frac{N'}{p'}$ and since $u^q \in L^{p'}(\Omega)$, we may apply Proposition \ref{proposition-1} to conclude that $u^{q+1} \in BV(\Omega)\subseteq L^{N'}(\Omega)$ and therefore $u \in L^{N'\left(\frac{N'}{p'}+1\right)}(\Omega)$, that is, $u\in L^{s_1}(\Omega)$.

Assuming now that $u \in L^{s_j}(\Omega)$, we take
\begin{equation*}
q=\frac{N'}{p'}\sum_{k=0}^j \left(\frac{N'}{p'}\right)^k \,.
\end{equation*}
By hypothesis, we already know that $u \in L^{qp'}(\Omega)$, and using Proposition \ref{proposition-1} we get $u^{q+1} \in BV(\Omega) \subseteq L^{N'}(\Omega)$. Hence, $u \in L^{N'(q+1)}(\Omega)= L^{s_{j+1}}(\Omega)$.
\end{proof}

Now we are ready to prove the main result of this section.

\begin{Theorem}\label{teo-regularidad}
Let $f$ be a non--negative function belonging to $L^p(\Omega)$ with $1<p<N$. Then the unique solution $u$ to problem \eqref{prob-prin} satisfies $u \in BV(\Omega)\cap L^{\frac{Np}{N-p}}(\Omega)$.
\end{Theorem}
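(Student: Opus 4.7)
The strategy is to convert the iteration of Theorem \ref{teo-induccion}, which only reaches exponents strictly below $\sigma:=\frac{Np}{N-p}$, into a single self-improving estimate at the endpoint $\sigma$. The magical value is
\[
q^{\ast}\;:=\;\frac{N(p-1)}{N-p},
\]
chosen so that $(q^{\ast}+1)\,N'\;=\;q^{\ast}p'\;=\;\sigma$; with it, both the Sobolev output exponent applied to $u^{q^{\ast}+1}$ and the H\"older dual exponent applied to $u^{q^{\ast}}$ coincide with $\sigma$. To avoid the circularity of assuming $u\in L^{\sigma}$ from the outset, I would work with the truncations $T_{k}(u)\in L^{\infty}(\Omega)$, for which $\|T_{k}(u)\|_{\sigma}$ is automatically finite, and obtain a bound independent of $k$.

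For each fixed $k>0$, I would plug $G_{\delta}(T_{k}(u)^{q^{\ast}})$ into the equation in \eqref{prob-prin}, essentially replaying the opening of the proof of Proposition \ref{proposition-1}. This function lies in $BV\cap L^{\infty}(\Omega)$ and vanishes on $\partial\Omega$ (because $u|_{\partial\Omega}=0$). Invoking Green's formula together with Propositions \ref{Prop2.3} and \ref{Prop2.2} (as in the proof of Proposition \ref{prop-medidas}) to identify $(\z,DG_{\delta}(T_{k}(u)^{q^{\ast}}))$ with $|DG_{\delta}(T_{k}(u)^{q^{\ast}})|$, I obtain
\[
\int_{\Omega}|DG_{\delta}(T_{k}(u)^{q^{\ast}})|+\int_{\Omega}G_{\delta}(T_{k}(u)^{q^{\ast}})\,|Du|=\int_{\Omega}G_{\delta}(T_{k}(u)^{q^{\ast}})\,f\,dx.
\]
Discarding the first non--negative term, using $|Du|\ge|DT_{k}(u)|$ as measures, and letting $\delta\downarrow 0$ by monotone convergence (since $G_{\delta}(s)\uparrow s$ for $s\ge 0$), the chain rule of Proposition \ref{Chain-Rule} (valid because $D^{j}T_{k}(u)=0$) yields
\[
\int_{\Omega}|DT_{k}(u)^{q^{\ast}+1}|\;\le\;(q^{\ast}+1)\int_{\Omega}T_{k}(u)^{q^{\ast}}f\,dx\;\le\;(q^{\ast}+1)\|T_{k}(u)\|_{\sigma}^{q^{\ast}}\|f\|_{p},
\]
the last step being H\"older with $q^{\ast}p'=\sigma$.

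Since $T_{k}(u)^{q^{\ast}+1}$ has zero trace on $\partial\Omega$, the Sobolev embedding of zero--trace $BV$ functions into $L^{N'}(\Omega)$ combined with $(q^{\ast}+1)N'=\sigma$ gives
\[
\|T_{k}(u)\|_{\sigma}^{q^{\ast}+1}\;\le\;C_{N}(q^{\ast}+1)\|T_{k}(u)\|_{\sigma}^{q^{\ast}}\|f\|_{p},
\]
and because $T_{k}(u)\in L^{\infty}$ makes $\|T_{k}(u)\|_{\sigma}$ a priori finite, I can divide (the case $\|T_{k}(u)\|_{\sigma}=0$ being trivial) to obtain the $k$-independent bound
\[
\|T_{k}(u)\|_{\sigma}\;\le\;\frac{C_{N}\,p(N-1)}{N-p}\,\|f\|_{p}.
\]
Letting $k\to\infty$ by monotone convergence (using $u\ge 0$, so $T_{k}(u)\uparrow u$) concludes $u\in L^{\sigma}(\Omega)$; the membership $u\in BV(\Omega)$ is already known from Theorem \ref{teo-induccion}.

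The main obstacle I anticipate is the first step: legitimizing the pairing identity $(\z,DG_{\delta}(T_{k}(u)^{q^{\ast}}))=|DG_{\delta}(T_{k}(u)^{q^{\ast}})|$ requires careful bookkeeping of precise representatives and a chain of Radon--Nikod\'ym derivatives, exactly as in the proofs of Propositions \ref{proposition-1} and \ref{prop-medidas}. Everything after that is a one--shot Moser--type estimate rendered rigorous by the truncation.
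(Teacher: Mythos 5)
Your proof is correct and takes a genuinely different route from the paper's. The paper applies Proposition \ref{proposition-1} for a sequence of subcritical exponents $q_n<q^\ast:=\tfrac{N(p-1)}{N-p}$, produces the self-improving bound \eqref{claim0} at each $q_n$ (converting $\int u^{q_n p'}$ into a fractional power of $\int u^{(q_n+1)N'}$ via H\"older against a power of $|\Omega|$), and then sends $q_n\uparrow q^\ast$ with Fatou's lemma. You instead sit directly at the critical exponent $q^\ast$, where $q^\ast p'=(q^\ast+1)N'=\sigma$, and use the truncations $T_k(u)$ to make the right-hand H\"older factor $\|T_k(u)\|_\sigma^{q^\ast}$ finite \emph{a priori}, so you can divide and obtain a $k$-uniform bound, then pass $k\to\infty$ by monotone convergence. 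The two arguments cost roughly the same in technical overhead (both must legitimize $G_\delta(T_k(\cdot)^q)$ as a test function through Proposition \ref{Prop2.2} and the chain rule), but yours is a one-shot estimate that dispenses with the auxiliary iteration of Theorem \ref{teo-induccion} and with the Fatou passage, and it produces the same explicit constant $C_N\tfrac{p(N-1)}{N-p}\|f\|_p$ without taking a $\liminf$ of exponent-dependent quantities. The only cosmetic slip is the closing remark attributing $u\in BV(\Omega)$ to Theorem \ref{teo-induccion}; it is already part of the definition of solution, but this is immaterial.
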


\begin{proof}
To show that $u \in L^{\frac{Np}{N-p}}(\Omega)$, we first claim that inequality \eqref{claim0} below holds for every $0<q<\dfrac{N(p-1)}{N-p}$.

If we choose $0<q<\dfrac{N(p-1)}{N-p}$, then we have that $qp'<\dfrac{N(p-1)}{N-p}\dfrac{p}{p-1}=\dfrac{Np}{N-p}$. Therefore, applying Theorem \ref{teo-induccion} and Proposition \ref{proposition-1} we arrive at $u^{q+1}\in BV(\Omega)$.

Now, we use Sobolev's inequality and inequality \eqref{exp-1} to get
\begin{multline}\label{exp-2}
\left( \int_\Omega u^{(q+1)N'} \,dx\right)^{\frac{1}{N'}} \le C(p,N) \int_\Omega |Du^{q+1}|
\\ \le C(p,N)(q+1)\|f\|_p \left(\int_\Omega u^{qp'} \,dx\right)^{\frac{1}{p'}} \,.
\end{multline}
Moreover, since $qp' < (q+1)N'$, we can apply H\"older's inequality and we also get
\begin{equation*}
\int_\Omega u^{qp'} \,dx \le \left(\int_\Omega (u^{qp'})^{\frac{(q+1)N'}{qp'}} \,dx\right)^{\frac{qp'}{(q+1)N'}} |\Omega|^{1-\frac{qp'}{(q+1)N'}} \,.
\end{equation*}
Summing up, we have
\begin{equation*}
\left( \int_\Omega u^{(q+1)N'} \,dx\right)^{\frac{1}{N'}}
 \le C(p,N)(q+1)\|f\|_p \left(\int_\Omega u^{(q+1)N'} \,dx \right)^{\frac{q}{(q+1)N'}} |\Omega|^{\frac{1}{p'}-\frac{q}{(q+1)N'}} \,,
\end{equation*}
that is, we have proved our claim:
\begin{equation}\label{claim0}
\left( \int_\Omega u^{(q+1)N'} \,dx \right)^{\frac{1}{N'}\left( 1-\frac{q}{q+1} \right)} \le C(p,N)(q+1)\|f\|_p |\Omega|^{\frac{1}{p'}-\frac{q}{(q+1)N'}} \,.
\end{equation}

Now, let $0<q_n<\dfrac{N(p-1)}{N-p}$ define a non--decreasing sequence convergent to $\frac{N(p-1)}{N-p}$. Hence, for every $n \in \N$ it holds
\begin{equation*}
\left( \int_\Omega u^{(q_n+1)N'} \,dx \right)^{\frac{1}{N'}\frac{1}{q_n +1}} \le C(p,N)(q_n+1)\|f\|_p |\Omega|^{\frac{1}{p'}-\frac{q_n}{(q_n+1)N'}} \,.
\end{equation*}
Thanks to Fatou's lemma, letting $n \to \infty$, we get
\begin{multline*}
\int_\Omega u^{\frac{p\,(N-1)}{N-p}N'} \,dx  \le \liminf_{n\to \infty}  \left[ C(p,N)(q_n+1)\|f\|_p |\Omega|^{\frac{1}{p'}-\frac{q_n}{(q_n+1)N'}} \right]^{N'(q_n+1)}
\\ \le \left[ C(p,N)\frac{p\,(N-1)}{N-p}\|f\|_p \right]^{\frac{Np}{N-p}} \,.
\end{multline*}

Therefore, $u \in L^{\frac{Np}{N-p}}(\Omega)$ holds.
\end{proof}

\begin{remark}
Going back to Proposition \ref{proposition-1}, it follows from $u^{\frac{N(p-1)}{N-p}} \in L^{p^\prime}(\Omega)$ that $u^{\frac{N(p-1)}{N-p}}$ can be taken as a test function in problem \eqref{prob-prin}, that is,
\[
\int_\Omega |D(u^{\frac{N(p-1)}{N-p}})|+\int_\Omega u^{\frac{N(p-1)}{N-p}}|Du|=\int_\Omega fu^{\frac{N(p-1)}{N-p}}\,.
\]
\end{remark}

\section{Explicit examples}

This section is devoted to show radial examples of solutions in a ball.
These examples allow us to provide evidence that our regularity result is sharp (see Remark \ref{regul} below).

In the sequel, we denote by $B_R(0)$ the open ball centered at $0$ and of radius $R$.

\begin{Example}\label{ej-1}
Let $R>0$, we consider problem
\begin{equation}\label{ejemplo-1}
\left\{\begin{array}{ll}
\displaystyle -\Div\left(\frac{Du}{|Du|}\right)+|Du|=\frac{\lambda}{|x|^q} & \hbox{ in
}B_R(0)\,,\\[3mm]
u=0 & \hbox{ on }\partial B_R(0)\,,
\end{array}\right.
\end{equation}
with $1<q<N$ and $\lambda >0$.
\end{Example}

We know that solution $u$ to problem \eqref{ejemplo-1} must be a non--negative function of bounded variation with no jump part and there also exists a vector field $\z \in \DM(\Omega)$ with $\|\z\|_\infty \le 1$  such that
\begin{equation}\label{cond-distribucion}
-\Div \z +|Du| =\frac{\lambda}{|x|^q} \,\text{ in }\,\dis (\Omega)\,,
\end{equation}
\begin{equation*}
(\z, DT_k(u))=|DT_k(u)| \,\text{ as measures in }\, \Omega \;\; (\mbox{for every } k >0)\,,
\end{equation*}
and
\begin{equation*}
u\big|_{\partial \Omega} = 0\,.
\end{equation*}

We assume that the solution is radial, that is, $u(x)=h(|x|)=h(r)$. Moreover, in order to satisfy the Dirichlet condition, we want that $h(R)=0$ holds. In addition, we also assume $h'(r)\le 0$ for all $0\le r\le R$.

If $h^\prime(r)<0$ in an interval, then the vector field is given by $\z(x)=\frac{h^\prime(|x|)}{|h^\prime(|x|)|}=-\frac{x}{|x|}$ and $\Div \z(x)= -\frac{N-1}{|x|}$.

Therefore, equation \eqref{cond-distribucion} becomes
\begin{equation}\label{dinar}
\frac{N-1}{r} - h'(r) = \frac{\lambda}{r^q} \,.
\end{equation}
Since we are assuming that $h'(r)<0$, then
\begin{equation*}
\frac{N-1}{r}- \frac{\lambda}{r^q} <0\,.
\end{equation*}
Now, we define
\begin{equation*}
\rho_\lambda = \left( \frac{N-1}{\lambda}\right)^{\frac{1}{1-q}} \,.
\end{equation*}
Thus, if $r \le \rho_\lambda$, then $h'(r)<0$ may hold, and if $r>\rho_\lambda$ the solution must satisfy $h'(r)=0$.

We assume $0<\rho_\lambda<R$. Then, when $\rho_\lambda \le r\le R$ the solution to problem \eqref{ejemplo-1} is constant, and since we know that $h(R)=0$ we deduce that $h(r)=0$ for all $\rho_\lambda \le r\le R$.

On account of \eqref{dinar}, if $0\le r < \rho_\lambda$, then solution is given by
\begin{multline*}
h(\rho_\lambda)-h(r)=\int_r^{\rho_\lambda} h'(s)\,ds= \int_r^{\rho_\lambda} \left(\frac{N-1}{s}-\frac{\lambda}{s^q}\right) \,ds
\\= (N-1)\log\left(\frac{\rho_\lambda}{r}\right) +\frac{\lambda}{1-q}(r^{1-q}-\rho_\lambda^{1-q})\,.
\end{multline*}
Therefore,
\begin{equation*}
u(x)=\left\{
\begin{array}{lcc}
(N-1)\log\left(\dfrac{|x|}{\rho_\lambda}\right) +\dfrac{\lambda}{1-q}(\rho_\lambda^{1-q}-|x|^{1-q}) & \mbox{if} & 0\le |x| <\rho_\lambda \,, \\
0 & \mbox{if} & \rho_\lambda < |x| \le R \,.
\end{array}  \right.
\end{equation*}

The vector field $\z$ must be identified.
When $\rho_\lambda \le r\le R$ we know that the vector field is $\z(x)=-\frac{x}{|x|}$, and when $0\le r < \rho_\lambda$, we assume that the vector field is radial: $\z(x)=x\,\xi (|x|)$. Thus, $\Div \z(x)= N\,\xi(|x|)+|x|\,\xi'(|x|)$, and equation \eqref{cond-distribucion} becomes
\begin{equation*}
-(N\,\xi(r)+r\,\xi'(r)) = \frac{\lambda}{r^q} \,.
\end{equation*}
That is,
\begin{equation*}
-r^N\,\xi(r)=-\int \left(r^N\,\xi(r)\right)' \,dr = \int \lambda \, r^{N-1-q} \,dr = \frac{\lambda}{N-q}\, r^{N-q} +C \,,
\end{equation*}
for some constant $C$ to be determinate.
Then
\begin{equation*}
\xi(r) = -\frac{\lambda}{N-q}\, r^{-q} -C\, r^{-N} \,.
\end{equation*}
Since we need a continuous vector field and we know that $\z(x)=-\frac{x}{\rho_\lambda}$ for $x$ with $|x|=\rho_\lambda$, we get the following equation
\begin{equation*}
\rho_\lambda^{-1} =\frac{\lambda}{N-q}\, \rho_\lambda^{-q} +C\, \rho_\lambda^{-N} \,.
\end{equation*}
Finally, using that $\lambda = (N-1)\,\rho_\lambda^{q-1}$ we deduce
\begin{equation*}
C=\rho_\lambda^{N-1}\frac{1-q}{N-q} \,,
\end{equation*}
and therefore, the vector field is given by
\begin{equation*}
\z(x)=\left\{
\begin{array}{lcc}
-\dfrac{x}{|x|} & \mbox{if} & 0\le |x| <\rho_\lambda \,, \\
-\dfrac{x}{N-q}\left((N-1) \dfrac{\rho_\lambda^{q-1}}{|x|^{q}} +(1-q)\dfrac{\rho_\lambda^{N-1}}{|x|^N}\right) & \mbox{if} & \rho_\lambda < |x| \le R \,.
\end{array}
\right.
\end{equation*}

\begin{remark}\label{regul}
In our Theorem \ref{teo-regularidad} we have prove that if $f\in L^{\frac Nq}(B_R(0))$, then
 $u \in L^{\frac N{q-1}}(B_R(0))$.
Since $\frac{\lambda}{|x|^q} \in L^s(B_R(0))$ for all $s<\frac{N}{q}$, it follows that $u \in L^r(B_R(0))$ for all $r<\frac{N}{q-1}$. This is exactly what it is shown.
\end{remark}

\begin{remark}
In \cite[Proposition 4.4]{LS} it was proved that for any ``small'' datum $f \in W^{-1,\infty}(B_R(0))$, the solution to problem \eqref{prob-prin} is always trivial. Nevertheless, in our examples we always get a positive solution. This is due to the fact that the datum $f(x)=\lambda \,|x|^{-q}$ when $1<q<N$ is not in the space $W^{-1,\infty}(B_R(0))$:

Let $s=N-q$, then function $v(x)=|x|^{-s}-R^{-s} \in W^{1,1}_0(B_R(0))$ since $s<N-1$. However, the product $f(x)v(x)=\lambda|x|^{-N}-f(x)R^{-s}\not \in L^1(B_R(0))$. We conclude that $f \not \in W^{-1,\infty}(B_R(0))$.
\end{remark}

It may be worth comparing our example with that occurring when the datum is $\frac{\lambda}{|x|^q}$, with $0<q<1$. In the same way as in Example \ref{ej-1}, the solution to problem \eqref{ejemplo-1} depends on a value
\begin{equation*}
r_\lambda = \left(\frac{N-q}{\lambda}\right)^{\frac{1}{1-q}} \,.
\end{equation*}
When $0<q<1$, the solution to problem \eqref{ejemplo-1} is given by
\begin{equation*}
u(x)=\left\{
\begin{array}{lcc}
(N-1) \log\left(\dfrac{r_\lambda}{R}\right)+\dfrac{\lambda}{1-q}(R^{1-q}-r_\lambda^{1-q}) & \mbox{if} & 0 \le |x| \le r_\lambda \,, \\
(N-1)\log\left(\dfrac{|x|}{R}\right) +\dfrac{\lambda}{1-q}(R^{1-q}-|x|^{1-q}) & \mbox{if} & r_\lambda< |x| \le R \,,
\end{array}  \right.
\end{equation*}
and the vector field associated is
\begin{equation*}
\z(x)=\left\{
\begin{array}{lcc}
-\dfrac{\lambda}{N-q}x|x|^{-q} & \mbox{if} & 0\le |x| \le r_\lambda \,, \\
-\dfrac{x}{|x|} & \mbox{if} & r_\lambda < |x| \le R \,.
\end{array}
\right.
\end{equation*}
It is easy to see that, since $0<q<1$, this solution is always bounded.

\end{document}